\documentclass[11pt]{amsart}
\usepackage{graphicx}
\usepackage[pagebackref, colorlinks=true, linkcolor=black, citecolor=black, urlcolor=black]{hyperref}

\usepackage{xargs}                      
\usepackage[colorinlistoftodos,prependcaption 
]{todonotes}                                                                                                                 

\newcommandx{\todoN}[2][1=]{\todo[linecolor=red,backgroundcolor=white,bordercolor=red,#1]{#2}}			

\usepackage{comment}
\usepackage{calc}

\usepackage{amsmath, amsthm, amsfonts, amssymb}
\usepackage{enumerate}
\usepackage{bbm}
\usepackage{mathrsfs}
\usepackage{amssymb}
\usepackage{mathtools}
\usepackage[all]{xy}
\usepackage{tikz}
\usepackage{tikz-cd}
\usetikzlibrary{matrix,arrows,decorations.pathmorphing}
\usepackage{xcolor}
\usepackage{bm}
	\usetikzlibrary{calc}
	\usetikzlibrary{trees}
	\tikzstyle{every picture}=[scale=.35,inner sep=0]
	\usepackage{color}

\usepackage[boxsize=0.7em]{ytableau} 

\newtheorem{thm}{Theorem}

\newtheorem{cor}{Corollary}
\theoremstyle{definition}

\theoremstyle{theorem}
\newtheorem{theorem}{Theorem}[section]
\newtheorem{lemma}[theorem]{Lemma}
\newtheorem{proposition}[theorem]{Proposition}

\theoremstyle{definition}
\newtheorem{definition}[theorem]{Definition}

\theoremstyle{remark}
\newtheorem{remark}[theorem]{Remark}

\numberwithin{equation}{section}

\makeatletter
\@namedef{subjclassname@2020}{\textup{MSC2020}}
\makeatother

\begin{document}

\title{A pointed Prym-Petri Theorem}

\author[N.~Tarasca]{Nicola Tarasca}
\address{Nicola Tarasca 
\newline \indent Department of Mathematics \& Applied Mathematics
\newline \indent Virginia Commonwealth University, Richmond, VA 23284}
\email{tarascan@vcu.edu}

\subjclass[2020]{14H40, 14H51, 14H10, 14C25 (primary), 14N15 (secondary)}
\keywords{Pointed Prym-Brill-Noether varieties, pointed Prym-Petri map,  degeneracy loci in type D, Prym-Tyurin varieties,  standard shifted tableaux}

\begin{abstract}
We construct pointed Prym-Brill-Noether varieties pa\-ram\-e\-trizing line bundles assigned to an irreducible \'etale double covering of a curve with a prescribed 
minimal vanishing at a fixed point. We realize them as degeneracy loci in type D and deduce their  classes in case of expected dimension. Thus, we determine a pointed Prym-Petri map and prove a pointed version of the Prym-Petri theorem implying that the expected dimension holds in the general case. These results build on  work of Welters and De Concini--Pragacz on the unpointed case. 
Finally, we show that  Prym varieties are Prym-Tyurin varieties for Prym-Brill-Noether curves of exponent enumerating standard shifted tableaux times a factor of $2$, extending to the Prym setting work of Ortega.
\end{abstract}

\vspace*{-1.5pc}

\maketitle

\vspace{-1.5pc}

\section*{Introduction}


Prym-Brill-Noether varieties parametrize line bundles assigned to an irreducible \'etale double covering of an algebraic curve with a prescribed minimal number of independent global sections.
Introduced by Welters \cite{welters1985theorem}, they have been endowed with the scheme structure  of a degeneracy locus  by \mbox{De Concini--Pragacz} \cite{de1995class}.
Specifically, following a result by Mumford \cite{mumford1971theta}, Prym-Brill-Noether varieties are realized as the  loci where two maximal isotropic subbundles of a vector bundle with a non-degenerate quadratic form intersect in a prescribed minimal dimension. 

This study presents a refinement of  Prym-Brill-Noether varieties obtained by imposing vanishing conditions at a fixed point. The goal is to extend some of the fundamental  results on pointed Brill-Noether varieties from Eisenbud-Harris \cite{MR846932} and Ciliberto-Harris-Teixidor-i-Bigas \cite{ciliberto1992endomorphisms} to the Prym setting.
For this, we construct pointed Prym-Brill-Noether varieties  as degeneracy  loci with scheme structure induced by Schubert  varieties in an orthogonal Grassmannian. Thus, results from Anderson-Fulton \cite{anderson2018chern} in type D apply to compute their expected dimension and class. 

The varieties are described as follows.
Let $C$ be a smooth algebraic curve of genus $g\geq 2$. A nontrivial $2$-torsion point $\epsilon$ in $\mathrm{Jac}(C)$ determines an irreducible \'etale double covering $\varphi\colon \widetilde{C}\rightarrow C$,
and this induces a norm map $\mathrm{Nm}\colon \mathrm{Pic}(\widetilde{C}) \rightarrow \mathrm{Pic}(C)$.
The scheme-theoretic inverse image $\mathrm{Nm}^{-1}(\omega_C)$ has two components:
\begin{align*}
\mathrm{Pr}^+ (C,\epsilon)&:=\left\{ L\in \mathrm{Pic}^{2g-2}(\widetilde{C}) \, : \, \mathrm{Nm}(L)= \omega_C, \, h^0(\widetilde{C},L)\equiv 0 \mbox{ mod } 2   \right\},\\
\mathrm{Pr}^- (C,\epsilon)&:=\left\{ L\in \mathrm{Pic}^{2g-2}(\widetilde{C}) \, : \, \mathrm{Nm}(L)= \omega_C, \, h^0(\widetilde{C},L)\equiv 1 \mbox{ mod } 2   \right\}.
\end{align*}
These are both translates of the Prym variety $\mathrm{Pr} (C,\epsilon)$ given by the connected component of the origin in the kernel of $\mathrm{Nm} \colon \mathrm{Jac}(\widetilde{C}) \rightarrow \mathrm{Jac}(C)$. In particular, the Prym variety is an abelian subvariety of $\mathrm{Jac}(\widetilde{C})$ of dimension $g-1$, and the canonical polarization on $\mathrm{Jac}(\widetilde{C})$ restricts to twice a principal polarization~$\Xi$ on $\mathrm{Pr} (C,\epsilon)$ \cite{mumford1974prym}.

For $\bm{a}=(0\leq a_0 < a_1< \dots< a_r)$ and a point $P$ in $\widetilde{C}$, we consider the   closed subset in $\mathrm{Nm}^{-1}(\omega_C)$ assigned to $(C, \epsilon, P)$  
\[
\mathrm{V}^{\bm{a}}(C, \epsilon, P) := \left\{ L\in \mathrm{Pic}^{2g-2}(\widetilde{C}) \, \left\vert \, 
\begin{array}{l}
\mathrm{Nm}(L)= \omega_C,  \\[0.2cm]
h^0(\widetilde{C},L)\equiv r+1 \mbox{ mod } 2, \\[0.2cm]
h^0(\widetilde{C}, L(-a_i\,P))\geq r+1-i, \,\, \forall \, i 
\end{array}
\right.
\right\}.
\]
The  Prym-Brill-Noether variety from \cite{welters1985theorem} is recovered set-the\-o\-ret\-ically when no special condition is imposed at the point $P$, i.e., for $\bm{a}=(0,1,\dots, r)$.
In \S\ref{sec:degstr}, we endow  $\mathrm{V}^{\bm{a}}(C, \epsilon, P)$ with the scheme structure of a degeneracy locus in type D, generalizing the case $\bm{a}=(0,1,\dots, r)$ from  \cite{de1995class}. We call $\mathrm{V}^{\bm{a}}(C, \epsilon, P)$ the \textit{pointed Prym-Brill-Noether variety} assigned to $(C, \epsilon, P)$.

We deduce that the expected dimension of $\mathrm{V}^{\bm{a}}(C, \epsilon, P)$ is
\begin{equation}
\label{eq:beta}
\beta(g, \bm{a}) := g-1 - |\bm{a}|
\end{equation}
where $|\bm{a}|:=\sum_{i=0}^r a_i$, 
and show that a Pfaffian formula from \cite{anderson2018chern} applies to compute the class of $\mathrm{V}^{\bm{a}}(C, \epsilon, P)$ when the expected dimension holds.

Specifically, let $\ell(\bm{a})$ be the number of positive elements of $\bm{a}$. The theta divisor $\Xi$ on  $\mathrm{Pr} (C,\epsilon)$ translates to a theta divisor on $\mathrm{Pr}^\pm (C,\epsilon)$. Let $\xi$ be the class of such theta divisor in the numerical equivalence ring $\mathrm{N}^*\left(\mathrm{Pr}^\pm (C, \epsilon), k\right)$ or in the singular cohomology ring $\mathrm{H}^*\left(\mathrm{Pr}^\pm (C, \epsilon), \mathbb{C}\right)$.
Define the class
\begin{equation}
\label{eq:classB}
\mathsf{B}(g,\bm{a}):= 2^{|\bm{a}|-\ell(\bm{a})}\, \prod_{i=0}^r \frac{1}{a_i!}\, \prod_{0\leq j<i\leq r}\frac{a_i-a_j}{a_i+a_j} \, \xi^{|\bm{a}|}
\end{equation}
in $\mathrm{N}^*\left(\mathrm{Pr}^\pm (C, \epsilon), k\right)$ for a ground field $k$ of characteristic different from $2$, and in  $\mathrm{H}^*\left(\mathrm{Pr}^\pm (C, \epsilon), \mathbb{C}\right)$. 

\begin{thm}
\label{thm:deglocus}
One has $\dim (\mathrm{V}^{\bm{a}}(C, \epsilon, P))\geq \beta(g,\bm{a})$. If equality holds, then $\mathrm{V}^{\bm{a}}(C, \epsilon, P)$ is 
a reduced Cohen-Macaulay and normal scheme, and 
\begin{equation*}
\left[ \mathrm{V}^{\bm{a}}(C, \epsilon, P) \right] = \mathsf{B}(g,\bm{a})
\end{equation*}
in $\mathrm{N}^*\left(\mathrm{Pr}^\pm (C, \epsilon), k\right)$ when \mbox{$\mathrm{char}(k) \neq 2$} and in $\mathrm{H}^*\left(\mathrm{Pr}^\pm (C, \epsilon), \mathbb{C}\right)$.
\end{thm}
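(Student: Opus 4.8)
The plan is to derive both assertions from the description of $\mathrm{V}^{\bm{a}}(C,\epsilon,P)$ as a degeneracy locus in type D, constructed in \S\ref{sec:degstr}, combined with the intersection theory of Anderson--Fulton \cite{anderson2018chern}. In Mumford's picture, refined as in \cite{de1995class}, over a component of $\mathrm{Nm}^{-1}(\omega_C)$ --- that is, over $\mathrm{Pr}^{\pm}(C,\epsilon)$, the sign being fixed by the parity of $r+1$ --- one has an orthogonal bundle $(\mathcal{E},q)$ equipped with a maximal isotropic subbundle (the bundle of sections) and a fixed isotropic flag, and the pointed conditions $h^0(\widetilde{C},L(-a_iP))\geq r+1-i$ refine that flag by the divisors $a_iP$. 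In this way $\mathrm{V}^{\bm{a}}(C,\epsilon,P)$ becomes a Schubert-type degeneracy locus in the orthogonal Grassmannian bundle of $(\mathcal{E},q)$, governed by the strict partition $\mu=\mu(\bm{a})$ whose parts are the positive entries of $\bm{a}$ written in decreasing order, so that $|\mu|=|\bm{a}|$ and $\ell(\mu)=\ell(\bm{a})$. First I would invoke the general codimension bound for such a locus inside a smooth ambient space: each irreducible component has codimension at most the expected value, which in type D equals $|\mu|=|\bm{a}|$; hence $\dim\mathrm{V}^{\bm{a}}(C,\epsilon,P)\geq g-1-|\bm{a}|=\beta(g,\bm{a})$.

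Assume now that equality holds, so the locus has the expected dimension. Its scheme structure is the one induced by Schubert varieties in an orthogonal Grassmannian bundle, and these model Schubert varieties are reduced, normal, and Cohen--Macaulay; via the Kempf--Laksov-type resolution in type D these properties pass to $\mathrm{V}^{\bm{a}}(C,\epsilon,P)$ precisely because it has the expected dimension. This is the argument of De Concini--Pragacz \cite{de1995class} in the unpointed case $\bm{a}=(0,1,\dots,r)$, which carries over to the present setting.

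For the class I would apply the Pfaffian formula of \cite{anderson2018chern}, the type D counterpart of the Kempf--Laksov and Giambelli--Thom--Porteous formulas, which writes $[\mathrm{V}^{\bm{a}}(C,\epsilon,P)]$ as a single Schur Pfaffian in the Chern classes of the bundles entering the degeneracy-locus data. Those Chern classes are computed by Grothendieck--Riemann--Roch applied to a Poincar\'e bundle on $\widetilde{C}\times\mathrm{Pr}^{\pm}(C,\epsilon)$, exactly as in the unpointed case of \cite{de1995class}: they are explicit scalar multiples of powers of $\xi$, and twisting by $\mathcal{O}(-a_iP)$ only shifts the relevant indices. Substituting these values, the Pfaffian becomes a specialization of the Schur $Q$-polynomial $Q_{\mu}$, and the coefficient of $\xi^{|\bm{a}|}$ is then evaluated by Thrall's shifted hook-length formula: the number of standard shifted tableaux of shape $\mu$ equals $|\bm{a}|!\,\prod_{i=0}^{r}\frac{1}{a_i!}\,\prod_{0\leq j<i\leq r}\frac{a_i-a_j}{a_i+a_j}$, a possible entry $a_0=0$ contributing trivially to both products. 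The remaining factor $2^{|\bm{a}|-\ell(\bm{a})}$ records the normalization relating the type D special classes to powers of $\xi$, and assembling these pieces yields the formula of \eqref{eq:classB}.

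The step I expect to be the main obstacle is this last evaluation: identifying the type D Pfaffian, once it is fed the explicit $\xi$-multiples coming from Riemann--Roch, with the closed product in \eqref{eq:classB}. This is where the Schur $Q$-function and shifted-tableaux combinatorics carry the weight, and where the powers of $2$ must be tracked carefully; by contrast the reducedness, Cohen--Macaulayness and normality are routine once the resolution is in hand, as in \cite{de1995class}.
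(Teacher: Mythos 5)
Your proposal is correct and follows essentially the same route as the paper: the type D degeneracy-locus structure yields the dimension bound and the reduced, Cohen--Macaulay, normal properties (the paper cites \cite[Lemma, pg.~108]{fulton2006schubert} for this), and the class comes from the Anderson--Fulton Pfaffian formula fed with $c(\mathcal{U})=1$ and $c(\mathcal{W}_i^\vee)=e^{2\xi}$ exactly as in \cite{de1995class}. The only cosmetic difference is in the final evaluation: the paper reduces the Pfaffian to the one computed in \cite[Prop.~6]{de1995class} times $2^{-\ell(\bm{a})}(2\xi)^{|\bm{a}|}$, whereas you invoke the Schur $Q$-function specialization and the shifted hook-length formula directly, which amounts to the same computation.
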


For arbitrary $(C, \epsilon, P)$, the class $\mathsf{B}(g,\bm{a})$ is supported on the pointed Prym-Brill-Noether variety also when $\dim (\mathrm{V}^{\bm{a}}(C, \epsilon, P))> \beta(g,\bm{a})$. 
Since $\xi$ is ample, $\mathsf{B}(g,\bm{a})\neq 0$ when \mbox{$\beta(g,\bm{a}) \geq 0$.}
Hence, we deduce:

\begin{cor}
If \mbox{$\beta(g,\bm{a}) \geq 0$,} then $\mathrm{V}^{\bm{a}}(C, \epsilon, P)$ is non-empty  and of dimension at least $\beta(g,\bm{a})$ for \textit{all} $(C, \epsilon, P)$ as above.
\end{cor}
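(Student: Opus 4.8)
The plan is to deduce the statement directly from Theorem~\ref{thm:deglocus} together with the ampleness of the theta class $\xi$. The key observation is that the Pfaffian formula of Anderson--Fulton \cite{anderson2018chern} behind \eqref{eq:classB} does more than evaluate $[\mathrm{V}^{\bm{a}}(C,\epsilon,P)]$ in the expected-dimension case: in the realization of $\mathrm{V}^{\bm{a}}(C,\epsilon,P)$ as a type-D degeneracy locus from \S\ref{sec:degstr}, that formula produces a cycle class supported on $\mathrm{V}^{\bm{a}}(C,\epsilon,P)$ whose image in $\mathrm{N}^*(\mathrm{Pr}^\pm(C,\epsilon),k)$, respectively $\mathrm{H}^*(\mathrm{Pr}^\pm(C,\epsilon),\mathbb{C})$, equals $\mathsf{B}(g,\bm{a})$ \emph{for every} $(C,\epsilon,P)$ --- this is the assertion recorded right after Theorem~\ref{thm:deglocus}. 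Consequently, if $\mathrm{V}^{\bm{a}}(C,\epsilon,P)$ were empty, then $\mathsf{B}(g,\bm{a})$ would necessarily vanish in the relevant (co)homology ring.

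So the crux is to check that $\mathsf{B}(g,\bm{a})\neq 0$ when $\beta(g,\bm{a})\geq 0$. First I would observe that the scalar factor in \eqref{eq:classB} is strictly positive: $2^{|\bm{a}|-\ell(\bm{a})}\geq 1$, each $\tfrac{1}{a_i!}>0$, and for $j<i$ one has $a_i-a_j>0$ and $a_i+a_j>0$, since $0\leq a_0<a_1<\dots<a_r$ forces $a_i>0$ for $i\geq 1$. Hence $\mathsf{B}(g,\bm{a})$ is a positive multiple of $\xi^{|\bm{a}|}$. Now $\xi$ is the class of a principal polarization on the abelian variety $\mathrm{Pr}^\pm(C,\epsilon)$, which has dimension $g-1$; in particular $\xi$ is ample, so $\xi^{m}\neq 0$ for all $0\leq m\leq g-1$. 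The hypothesis $\beta(g,\bm{a})=g-1-|\bm{a}|\geq 0$ is precisely the inequality $|\bm{a}|\leq g-1$, so $\xi^{|\bm{a}|}\neq 0$ and therefore $\mathsf{B}(g,\bm{a})\neq 0$. Combining with the previous paragraph, $\mathrm{V}^{\bm{a}}(C,\epsilon,P)$ is non-empty for all $(C,\epsilon,P)$ as in the statement.

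Finally, non-emptiness upgrades the degeneracy-locus dimension bound to an honest statement: being a nonempty type-D degeneracy locus of expected codimension $|\bm{a}|$ in the $(g-1)$-dimensional $\mathrm{Pr}^\pm(C,\epsilon)$, the scheme $\mathrm{V}^{\bm{a}}(C,\epsilon,P)$ has every irreducible component of dimension at least $(g-1)-|\bm{a}|=\beta(g,\bm{a})$, which is exactly the first assertion of Theorem~\ref{thm:deglocus}. I expect the only delicate point to be the fact invoked in the first paragraph --- that the Pfaffian expression represents a class \emph{supported} on $\mathrm{V}^{\bm{a}}(C,\epsilon,P)$ even in the excess-dimension case, rather than a numerical identity valid only when the expected dimension is attained. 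This is a standard feature of Fulton-style degeneracy formulas (the class is constructed in the Chow group, or Borel--Moore homology, of the locus itself before being pushed forward), and I would pin it down by citing the precise supported-class statement in \cite{anderson2018chern} specialized to the construction of \S\ref{sec:degstr}.
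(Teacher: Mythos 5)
Your proposal is correct and follows essentially the same route as the paper: the Pfaffian class from the degeneracy-locus construction is supported on $\mathrm{V}^{\bm{a}}(C,\epsilon,P)$ for every triple, it equals a positive multiple of $\xi^{|\bm{a}|}$ which is nonzero by ampleness when $|\bm{a}|\leq g-1$, and the dimension bound is the first assertion of Theorem~\ref{thm:deglocus}. The paper compresses this into the two sentences preceding the corollary (crediting the argument to the unpointed case in \cite{de1995class}), so no further commentary is needed.
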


\noindent The above argument is modeled on the case $\bm{a}=(0,1,\dots, r)$, first treated in \cite{de1995class}.
Our main result here shows that the expected dimension is generically satisfied:

\begin{thm}
\label{thm:mainintro}
For a general curve $C$ of genus $g$, an arbitrary nontrivial {$2$-torsion} point $\epsilon$ in $\mathrm{Jac}(C)$, and a general point $P\in \widetilde{C}$, the variety $\mathrm{V}^{\bm{a}}(C, \epsilon, P)$ is either empty, or smooth of dimension $\beta(g, \bm{a})$ at any $L\in \mathrm{V}^{\bm{a}}(C, \epsilon, P)$ such that $h^0(\widetilde{C}, L(-a_i \, P))=r+1-i$ for  $i=0,\dots, r$.
\end{thm}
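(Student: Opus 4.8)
The plan is to identify the Zariski tangent space of $\mathrm{V}^{\bm{a}}(C,\epsilon,P)$ with the kernel of a pointed Prym-Petri map, reduce the theorem to the injectivity of that map, and then establish injectivity by degenerating $(C,\epsilon,P)$ to a chain of elliptic curves carrying an admissible double covering. First I would carry out the tangent space computation: building on the degeneracy-locus description of \S\ref{sec:degstr} (and exactly as in the unpointed case of \cite{welters1985theorem,de1995class}), at a point $L\in\mathrm{V}^{\bm{a}}(C,\epsilon,P)$ with $h^0(\widetilde{C},L(-a_iP))=r+1-i$ for all $i$, the tangent space $T_L\mathrm{V}^{\bm{a}}(C,\epsilon,P)$ is the annihilator, inside the $(g-1)$-dimensional tangent space $H^1(\widetilde{C},\mathcal{O}_{\widetilde{C}})^{-}$ to the Prym torsor $\mathrm{Pr}^{\pm}(C,\epsilon)$, of the image of a linear map
\[
\mu^{\bm{a}}_L\colon M^{\bm{a}}_L\longrightarrow H^0\!\left(\widetilde{C},\omega_{\widetilde{C}}\right)^{-},\qquad \dim M^{\bm{a}}_L=|\bm{a}|,
\]
the \emph{pointed Prym-Petri map}, which for $\bm{a}=(0,1,\dots,r)$ recovers the Prym-Petri map $\bigwedge^{2}H^0(\widetilde{C},L)\to H^0(\widetilde{C},\omega_{\widetilde{C}})^{-}$ of \cite{welters1985theorem,de1995class}. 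Consequently $\dim T_L\mathrm{V}^{\bm{a}}(C,\epsilon,P)=g-1-\operatorname{rk}\mu^{\bm{a}}_L\geq\beta(g,\bm{a})$, with equality precisely when $\mu^{\bm{a}}_L$ is injective; since $\dim_L\mathrm{V}^{\bm{a}}(C,\epsilon,P)\geq\beta(g,\bm{a})$ by Theorem~\ref{thm:deglocus}, injectivity of $\mu^{\bm{a}}_L$ makes $\mathrm{V}^{\bm{a}}(C,\epsilon,P)$ smooth of dimension $\beta(g,\bm{a})$ at $L$. It thus suffices to prove that, for general $C$, arbitrary nontrivial $\epsilon$, and general $P$, the map $\mu^{\bm{a}}_L$ is injective at every such ``good'' $L$.

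Next I would pass to families. Non-injectivity of $\mu^{\bm{a}}$ at a good point is a closed condition: one works over an \'etale cover of a partial compactification of the moduli space $\mathcal{R}_g$ of irreducible \'etale double covers of genus-$g$ curves, adjoins $P$ as a section of the universal (admissible) double cover, forms the relative pointed Prym-Brill-Noether scheme together with the relative pointed Prym-Petri map over the good locus, and applies upper semicontinuity of $\operatorname{rk}\mu^{\bm{a}}$. It is then enough to exhibit, over a dense subset of moduli, a single stable limit $(C_0,\epsilon_0,P_0)$ together with a one-parameter smoothing for which the limiting pointed Prym-Petri map is injective at every good limit point of $\mathrm{V}^{\bm{a}}$. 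Since the statement asks for genericity only in $C$ and $P$, the argument must in fact handle \emph{all} admissible limit structures $\epsilon_0$ over a dense set of boundary curves, so that the conclusion holds for every nontrivial $\epsilon$ once $C$ is general.

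For the limit I would take $C_0$ a general chain of elliptic curves $E_1\cup\dots\cup E_g$, $P_0$ a general point of $E_1$, and $\epsilon_0$ an admissible limit $2$-torsion whose associated double cover $\widetilde{C}_0\to C_0$ is connected, following the degenerations of \cite{welters1985theorem,de1995class}. On such a chain the degeneration is governed by limit Prym-linear series whose aspects live on the components (equivalently, on the components of $\widetilde{C}_0$), constrained by vanishing sequences at the nodes compatible with the non-degenerate quadratic form of \S\ref{sec:degstr}; the good limit points are finite in number and are classified by combinatorial data of standard-shifted-tableaux type, and both $M^{\bm{a}}_L$ and $\mu^{\bm{a}}_L$ split into a direct sum indexed by the components. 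On each elliptic component the pertinent cohomology is completely explicit, and one checks that each summand of the limiting map is injective: the sections feeding a fixed summand vanish to prescribed, pairwise distinct orders at the two nodes (and at $P_0$ on $E_1$), so no nontrivial relation survives. This is the Gieseker-style argument behind the Petri theorem and its pointed refinements \cite{MR846932,ciliberto1992endomorphisms}, in the Prym form initiated by \cite{welters1985theorem,de1995class}.

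The hard part is this last step. One must make rigorous the limit theory of Prym-linear series — equivalently, control the behaviour of the orthogonal degeneracy-locus structure of \S\ref{sec:degstr} under specialization — establishing the finiteness and combinatorial classification of good limit points and the compatibility of the involution, the anti-invariant differentials, and the non-degenerate quadratic form with degeneration, and only then carry out the component-wise injectivity check. A further subtlety, present precisely because $\epsilon$ is arbitrary, is that the boundary data must sweep out enough admissible Prym structures $\epsilon_0$ over a dense set of elliptic chains to recover the statement for an arbitrary nontrivial $\epsilon$ on a general curve $C$.
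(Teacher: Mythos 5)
Your overall strategy coincides with the paper's: identify $T_L\mathrm{V}^{\bm{a}}(C,\epsilon,P)$ with $\mathrm{Im}(\overline{\mu})^\perp$ for a pointed Prym-Petri map with $|\bm{a}|$-dimensional source, reduce the theorem to injectivity of that map via the lower bound of Theorem \ref{thm:deglocus}, use openness plus irreducibility of the moduli of triples to reduce to a single example, and produce that example by degenerating to a chain of elliptic curves with a connected double cover. Your worry about having to ``sweep out enough admissible Prym structures $\epsilon_0$'' is unnecessary: since the moduli space of triples $(C,\epsilon,P)$ with $\epsilon\neq 0$ is irreducible, one triple suffices, and ``general $C$, arbitrary nontrivial $\epsilon$'' is equivalent to ``general pair $(C,\epsilon)$.''

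The genuine gap is in the final injectivity check, which you defer as ``the hard part'' and sketch incorrectly. First, the paper does not classify good limit points by tableaux or split $\mu^{\bm{a}}_L$ componentwise; it takes an arbitrary kernel element $\rho$ over the generic fiber, extends the line bundles over the total space of the family, and propagates an Eisenbud--Harris order-of-vanishing inequality along the chain to conclude that the restriction $\overline{\rho}$ to the one irreducible double cover $\widetilde{E}_g\to E_g$ vanishes to order at least $2g-2$ at \emph{both} preimage nodes $P'_g$ and $P''_g$. Second, and more importantly, your conclusion that ``no nontrivial relation survives'' because the relevant sections have pairwise distinct vanishing orders does not close the argument in the Prym setting: the source of $\overline{\mu}$ is the quotient $\bigoplus_i \langle\sigma_i\rangle\otimes H^0(M(a_iP))/\iota^*H^0(L(-a_iP))$, and since $P'_g-P''_g$ is $2$-torsion in $\mathrm{Jac}(\widetilde{E}_g)$ (so $2P'_g\equiv 2P''_g$), the vanishing conditions at the two nodes do \emph{not} force $\overline{\rho}=0$ on the nose --- there genuinely are nonzero tensors $\lambda_\ell\otimes\mu_m$ satisfying $\mathrm{ord}_{P'_g}+\mathrm{ord}_{P''_g}\geq 2g-2$ at both points. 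The paper's Lemma \ref{lem:zeroconclusion} must therefore construct an explicit basis of sections on $\widetilde{E}_g$ with controlled divisors (the $D_{2k}$, $D_{2k+1}$ supported on $P'_g$, $P''_g$, and an auxiliary pair $Q+R$), show that the surviving terms satisfy $c_\ell+b_m=2g-2$ with a parity constraint, and deduce that each such term lies precisely in the image of $\iota^*L^i$, i.e.\ is killed by the quotient. Without this identification of the surviving terms with the subspace being quotiented out, the argument as you state it would prove too much (that the un-quotiented Petri map is injective, which is false) or nothing at all.
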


This statement extends the result on the Prym-Brill-Noether varieties from \cite{welters1985theorem}. The proof  involves an infinitesimal argument and  the theory of limit linear series from Eisenbud-Harris \cite{MR723217}.
In conclusion, we deduce:

\begin{cor}
For a general curve $C$ of genus $g$, an arbitrary nontrivial $2$-torsion point $\epsilon$ in $\mathrm{Jac}(C)$, and a general point $P\in \widetilde{C}$, one has
\[
\mathrm{V}^{\bm{a}}(C, \epsilon, P)\neq \varnothing \iff \beta(g, \bm{a})\geq 0.
\]
When nonempty, $\mathrm{V}^{\bm{a}}(C, \epsilon, P)$ has dimension $\beta(g,\bm{a})$ and  class  equal to $\mathsf{B}(g,\bm{a})$ in $\mathrm{N}^*\left(\mathrm{Pr}^\pm (C, \epsilon), k\right)$ when \mbox{$\mathrm{char}(k) \neq 2$} and in $\mathrm{H}^*\left(\mathrm{Pr}^\pm (C, \epsilon), \mathbb{C}\right)$.
\end{cor}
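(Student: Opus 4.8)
The plan is to deduce the statement from the Corollary following Theorem~\ref{thm:deglocus}, from Theorem~\ref{thm:mainintro}, and from Theorem~\ref{thm:deglocus}, after covering $\mathrm{V}^{\bm{a}}(C,\epsilon,P)$ by the loci where the vanishing at $P$ is exactly prescribed. One implication and one inequality are free and hold for every triple $(C,\epsilon,P)$: by the Corollary following Theorem~\ref{thm:deglocus}, if $\beta(g,\bm{a})\geq 0$ then $\mathrm{V}^{\bm{a}}(C,\epsilon,P)\neq\varnothing$ and $\dim\mathrm{V}^{\bm{a}}(C,\epsilon,P)\geq\beta(g,\bm{a})$. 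It remains to prove the implication $\mathrm{V}^{\bm{a}}(C,\epsilon,P)\neq\varnothing\Rightarrow\beta(g,\bm{a})\geq 0$ and, when $\beta(g,\bm{a})\geq 0$, the sharp dimension and the class formula.

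First I would record the combinatorial bookkeeping. Given $L\in\mathrm{Pic}^{2g-2}(\widetilde{C})$ with $\mathrm{Nm}(L)=\omega_C$ and $h^0(\widetilde{C},L)\equiv r+1\bmod 2$, let $0\leq c_0<c_1<\dots<c_{n-1}\leq 2g-2$ be its vanishing sequence at $P$, where $n=h^0(\widetilde{C},L)$; one has $n\geq r+1$ once $L\in\mathrm{V}^{\bm{a}}(C,\epsilon,P)$. Counting sections vanishing to prescribed order shows that $L\in\mathrm{V}^{\bm{a}}(C,\epsilon,P)$ if and only if $c_{n-r-1+i}\geq a_i$ for all $i$, and in that case $a_i'(L):=c_{n-r-1+i}$ gives a strictly increasing sequence $\bm{a}'(L)=(a_0'(L),\dots,a_r'(L))$ with $a_0'(L)\geq 0$, $\bm{a}'(L)\geq\bm{a}$ componentwise, $a_r'(L)\leq 2g-2$, and $h^0(\widetilde{C},L(-a_i'(L)\,P))=r+1-i$ for every $i$. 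Introduce, for each sequence $\bm{a}'\geq\bm{a}$, the locus $W_{\bm{a}'}:=\{L\in\mathrm{V}^{\bm{a}'}(C,\epsilon,P):h^0(\widetilde{C},L(-a_i'\,P))=r+1-i\text{ for all }i\}$. Then $W_{\bm{a}'}\subseteq\mathrm{V}^{\bm{a}'}(C,\epsilon,P)\subseteq\mathrm{V}^{\bm{a}}(C,\epsilon,P)$, and since every $L\in\mathrm{V}^{\bm{a}}(C,\epsilon,P)$ lies in $W_{\bm{a}'(L)}$, one obtains the finite cover $\mathrm{V}^{\bm{a}}(C,\epsilon,P)=\bigcup_{\bm{a}\leq\bm{a}'}W_{\bm{a}'}$, the union running over the finitely many sequences $\bm{a}'\geq\bm{a}$ with $a_r'\leq 2g-2$.

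Now fix $C$ general and $P\in\widetilde{C}$ general so that the conclusion of Theorem~\ref{thm:mainintro} holds simultaneously for all these finitely many sequences $\bm{a}'$ — a finite intersection of dense open conditions. By construction every point of $W_{\bm{a}'}$ satisfies the exact-vanishing hypothesis of Theorem~\ref{thm:mainintro} for $\bm{a}'$, so at such a point $\mathrm{V}^{\bm{a}'}(C,\epsilon,P)$ is smooth of dimension $\beta(g,\bm{a}')$; consequently $W_{\bm{a}'}$ is empty or of dimension at most $\beta(g,\bm{a}')=g-1-|\bm{a}'|\leq g-1-|\bm{a}|=\beta(g,\bm{a})$. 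If $\beta(g,\bm{a})<0$, then $\beta(g,\bm{a}')<0$ for all the $\bm{a}'$ in the cover, so each $W_{\bm{a}'}$ is empty (a scheme is not smooth of negative dimension at any point), whence $\mathrm{V}^{\bm{a}}(C,\epsilon,P)=\varnothing$; this establishes the remaining implication $\mathrm{V}^{\bm{a}}(C,\epsilon,P)\neq\varnothing\Rightarrow\beta(g,\bm{a})\geq 0$. If instead $\beta(g,\bm{a})\geq 0$, the cover gives $\dim\mathrm{V}^{\bm{a}}(C,\epsilon,P)=\max_{\bm{a}'}\dim W_{\bm{a}'}\leq\beta(g,\bm{a})$, which together with the reverse inequality recalled in the first paragraph yields $\dim\mathrm{V}^{\bm{a}}(C,\epsilon,P)=\beta(g,\bm{a})$. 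Since this is the expected dimension, Theorem~\ref{thm:deglocus} applies and gives $[\mathrm{V}^{\bm{a}}(C,\epsilon,P)]=\mathsf{B}(g,\bm{a})$ in $\mathrm{N}^*(\mathrm{Pr}^\pm(C,\epsilon),k)$ when $\mathrm{char}(k)\neq 2$ and in $\mathrm{H}^*(\mathrm{Pr}^\pm(C,\epsilon),\mathbb{C})$.

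The step I expect to require the most care is the uniformity of the genericity: Theorem~\ref{thm:mainintro} produces, for each $\bm{a}'$, a dense open locus of good pairs $(C,P)$, and these must be intersected over all $\bm{a}'$ appearing in the cover; this is harmless only because the constraint $a_r'\leq 2g-2$ makes that index set finite. Beyond this, the argument is the elementary translation of the conditions $h^0(\widetilde{C},L(-a_i\,P))\geq r+1-i$ into statements about the vanishing sequence of $L$ at $P$, together with the inequality $\dim\mathrm{V}^{\bm{a}}(C,\epsilon,P)\geq\beta(g,\bm{a})$ already available.
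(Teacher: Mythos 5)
Your proposal is correct and follows essentially the same route the paper intends: the paper states this corollary as a direct deduction from the corollary to Theorem~\ref{thm:deglocus} (non-emptiness and the lower bound on dimension when $\beta\geq 0$), Theorem~\ref{thm:mainintro} (the upper bound via the exact-vanishing strata), and Theorem~\ref{thm:deglocus} again (the class once the expected dimension holds). Your stratification by the exact vanishing sequence $\bm{a}'(L)$ and the finiteness observation $a_r'\leq 2g-2$ are precisely the standard bookkeeping the paper leaves implicit.
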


For example, when $\beta(g, \bm{a})= 0$, the variety $\mathrm{V}^{\bm{a}}(C, \epsilon, P)$ for a general $(C, \epsilon, P)$ consists of finitely many distinct  points, counted by the degree of the class $\mathsf{B}(g,\bm{a})$:
\begin{equation*}
\deg \mathsf{B}(g,\bm{a}):= |\bm{a}|!\, 2^{|\bm{a}|-\ell(\bm{a})}\, \prod_{i=0}^r \frac{1}{a_i!}\, \prod_{0\leq j<i\leq r}\frac{a_i-a_j}{a_i+a_j} .
\end{equation*}
Here, the Poincar\'e formula yields $\deg \xi^{g-1}= (g-1)!=|\bm{a}|!$.
Remarkably, the above count has an interpretation as enumeration of certain tableaux:  comparing with \cite[Prop.~10.4]{zbMATH00051939} (see also Remarks \ref{rmk:mst} and \ref{rmk:naforunmarked}), we have
$\deg \mathsf{B}(g,\bm{a})=n_{\bm{a}}$ where
\begin{equation*}
n_{\bm{a}}:=  2^{|\bm{a}|-\ell(\bm{a})}\, \#\left\{ \mbox{standard shifted tableaux of shape $(a_r,\dots, a_0)$} \right\}.
\end{equation*}
Standard shifted tableaux are defined as follows. For $\bm{\lambda}=(\lambda_1, \dots, \lambda_\ell )$ with $\lambda_1  >\cdots >\lambda_\ell >0$, the \textit{shifted diagram} $S(\bm{\lambda})$ of shape $\bm{\lambda}$ is a configuration of $\ell$ rows of boxes, with $\lambda_i$ boxes in the $i$-th row, such that, for each $i>1$, the first box in row $i$ is placed underneath the second box in row $i-1$. A \textit{standard shifted tableau} of shape $\bm{\lambda}$ is a filling of the boxes of $S(\bm{\lambda})$ by the numbers $1,2,\dots, |\bm{\lambda}|$ such that the entries are strictly increasing down each column and from left to right along each row (as in Figure \ref{fig:T}).

\begin{figure}[htb]
\begin{center}
\ytableausetup{mathmode, boxsize=1.5em}
\begin{ytableau}
 1 & 2 & 4 & 6\\
\none & 3 & 5\\
\none & \none & 7 
\end{ytableau}
\end{center}
\caption{A standard shifted tableau of shape $(4,2,1)$.}
\label{fig:T}
\end{figure}

In the final part of the paper, we show that $n_{\bm{a}}$ has an incarnation also in the geometry of pointed Prym-Brill-Noether \textit{curves}:

\begin{thm}
\label{thm:expPT}
When $\dim \mathrm{V}^{\bm{a}}(C, \epsilon, P) = \beta(g, \bm{a})=1$,
the principally polarized abelian variety  $(\mathrm{Pr} (C,\epsilon), \Xi)$ is isomorphic  to a Prym-Tyurin variety for the curve $\mathrm{V}^{\bm{a}}(C, \epsilon, P)$ of exponent 
$
n_{\bm{a}}.
$  
\end{thm}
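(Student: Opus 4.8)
The plan is to produce a correspondence on the curve $X := \mathrm{V}^{\bm a}(C,\epsilon,P)$ (sitting inside $\mathrm{Pr}^\pm(C,\epsilon)$, hence inside $\mathrm{Jac}(\widetilde C)$) and verify the defining identity of a Prym–Tyurin variety of exponent $n_{\bm a}$. Recall that $(\mathrm{Pr}(C,\epsilon),\Xi)$ is a Prym–Tyurin variety for $X$ of exponent $e$ if there is a correspondence $D\subset X\times X$, fixed-point free and symmetric, inducing an endomorphism $\gamma$ of $\mathrm{Jac}(X)$ satisfying $(\gamma-1)(\gamma+e-1)=0$, such that the induced map $\mathrm{Pr}(C,\epsilon)\hookrightarrow \mathrm{Jac}(X)$ identifies $\mathrm{Pr}(C,\epsilon)$ with the image of $\gamma$ and pulls $\Xi$ back to the restriction of the principal polarization. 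The natural correspondence here is the \emph{incidence} correspondence coming from the embedding $X\hookrightarrow \mathrm{Pr}^\pm(C,\epsilon)$ together with the class $\mathsf B(g,\bm a)=\xi\,\theta^{?}$; concretely, following Ortega's treatment of the unpointed case, I would take $D = \{(L,L')\in X\times X : L' \in \Theta_L\}$ where $\Theta_L$ is a translate of the theta divisor through $L$ — i.e. the pullback to $X\times X$ of the difference/incidence divisor on $\mathrm{Pr}^\pm\times\mathrm{Pr}^\pm$ representing $\xi$ pulled back appropriately. The key numerical input is that, by Theorem~\ref{thm:deglocus}, the class of $X$ in $\mathrm{Pr}^\pm(C,\epsilon)$ is $\mathsf B(g,\bm a)$, so that $X\cdot \xi^{g-2}\cdot(\text{pt-normalization}) = \deg\mathsf B(g,\bm a)\cdot(\text{something}) $ — and $\deg\mathsf B(g,\bm a)$ at the $\beta=0$ level equals $n_{\bm a}$, while at $\beta=1$ the self-intersection numbers that control the exponent are governed by the same combinatorial constant.

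**Key steps, in order.**

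First, I would recall Welters's/Kanev's criterion: an abelian variety $(A,\Theta_A)$ of dimension $h$ is a Prym–Tyurin variety of exponent $e$ for a curve $X$ iff $A\subset \mathrm{Jac}(X)$ and the restriction of the principal polarization of $\mathrm{Jac}(X)$ to $A$ is $e$ times $\Theta_A$; this reduces everything to (a) exhibiting the inclusion $\mathrm{Pr}(C,\epsilon)\hookrightarrow \mathrm{Jac}(X)$ and (b) computing the pullback of the polarization. Second, for (a): the Abel–Prym-type map is the Gauss/Albanese map — the inclusion $X\hookrightarrow \mathrm{Pr}^\pm(C,\epsilon)$, a translate of $\mathrm{Pr}(C,\epsilon)$, induces $\mathrm{Alb}(X)=\mathrm{Jac}(X)\twoheadrightarrow \mathrm{Pr}(C,\epsilon)$ and dually $\mathrm{Pr}(C,\epsilon)=\widehat{\mathrm{Pr}(C,\epsilon)}\hookrightarrow \widehat{\mathrm{Jac}(X)}=\mathrm{Jac}(X)$, using the principal polarizations to identify each abelian variety with its dual. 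Third, for (b): the composite $\mathrm{Pr}(C,\epsilon)\hookrightarrow\mathrm{Jac}(X)\twoheadrightarrow\mathrm{Pr}(C,\epsilon)$ is multiplication by the exponent $e$, and $e$ is computed as the intersection number of $X$ with the class representing $\xi$ inside $\mathrm{Pr}^\pm(C,\epsilon)$, i.e. $e=[X]\cdot\xi^{g-2}=\mathsf B(g,\bm a)\cdot \xi^{g-2}$. Fourth, I would evaluate this intersection number: since $\mathsf B(g,\bm a)=2^{|\bm a|-\ell(\bm a)}\prod\frac1{a_i!}\prod\frac{a_i-a_j}{a_i+a_j}\,\xi^{|\bm a|}$ and $|\bm a|=g-2$ in the $\beta=1$ case, we get $\mathsf B(g,\bm a)\cdot\xi = 2^{|\bm a|-\ell(\bm a)}\prod\frac1{a_i!}\prod\frac{a_i-a_j}{a_i+a_j}\,\xi^{g-1}$, whose degree, via the Poincaré formula $\deg\xi^{g-1}=(g-1)!=|\bm a|!$... wait — here $|\bm a|=g-2$, so $\deg\xi^{g-1}=(g-1)!$ and $e=(g-1)\cdot 2^{|\bm a|-\ell(\bm a)}|\bm a|!\prod\frac1{a_i!}\prod\frac{a_i-a_j}{a_i+a_j}$, which must be rewritten and seen to equal $n_{\bm a}=2^{|\bm a|-\ell(\bm a)}\cdot\#\{\text{standard shifted tableaux of shape }(a_r,\dots,a_0)\}$ via the hook-length formula for shifted tableaux. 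This last combinatorial identification is exactly the content of comparing with \cite[Prop.~10.4]{zbMATH00051939}, as already used in the Introduction.

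**The main obstacle.**

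The conceptual steps (Welters's criterion, Albanese duality) are formal; the real work is twofold. The first difficulty is \emph{verifying that the endomorphism} $\gamma$ of $\mathrm{Jac}(X)$ attached to the incidence correspondence actually satisfies a quadratic equation $(\gamma-1)(\gamma+e-1)=0$ with the right $e$ — equivalently, that the restricted polarization is genuinely $e\,\Xi$ and not merely "$e\,\Xi$ up to isogeny"; this requires controlling that the map $\mathrm{Jac}(X)\to\mathrm{Pr}(C,\epsilon)$ is surjective with connected kernel, which I expect to follow from the fact that $X$ generates $\mathrm{Pr}^\pm(C,\epsilon)$ as a group (a consequence of $\mathrm{Pr}^\pm$ being simple or of an irreducibility/monodromy argument for general $(C,\epsilon,P)$, in the spirit of Theorem~\ref{thm:mainintro}) — and here one genuinely needs the generality hypothesis on $(C,\epsilon,P)$, or at least the hypothesis $\dim X=\beta=1$. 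The second, and I think the sharper, difficulty is the \emph{factor of $2$}: the canonical polarization on $\mathrm{Jac}(\widetilde C)$ restricts to $2\Xi$ on $\mathrm{Pr}(C,\epsilon)$, so one must carefully track whether the natural correspondence computes the exponent with respect to $\Xi$ or with respect to $2\Xi$, and it is precisely this bookkeeping that produces the stated exponent $n_{\bm a}$ (with its $2^{|\bm a|-\ell(\bm a)}$) rather than half of it — matching Ortega's unpointed statement, where the exponent also carries an extra factor of $2$ relative to the naive Brill–Noether count. I would handle this by working on $\widetilde C$ throughout, computing the incidence class on $\mathrm{Pic}^{2g-2}(\widetilde C)$ and only restricting to $\mathrm{Pr}^\pm$ at the end, so that the factor of $2$ appears transparently from $\Xi|_{\mathrm{Pr}} = \frac12\,\Theta_{\widetilde C}|_{\mathrm{Pr}}$.
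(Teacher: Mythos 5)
Your overall architecture --- embed $X:=\mathrm{V}^{\bm{a}}(C,\epsilon,P)$ into a translate of $\mathrm{Pr}(C,\epsilon)$, obtain a surjection $\mathrm{Jac}(X)\twoheadrightarrow\mathrm{Pr}(C,\epsilon)$, dualize to an embedding, and read off the exponent from the class computed in Theorem \ref{thm:deglocus} via Welters's criterion --- is the same as the paper's, and the correspondence/Kanev machinery of your opening paragraph is never actually used. But there are two genuine gaps. The first concerns the embedding: for the dual map $\mathrm{Pr}(C,\epsilon)\cong\mathrm{Pr}(C,\epsilon)^\vee\rightarrow\mathrm{Jac}(X)^\vee\cong\mathrm{Jac}(X)$ to be injective you need $\mathrm{Jac}(X)\rightarrow\mathrm{Pr}(C,\epsilon)$ to have \emph{connected} kernel, equivalently $H_1(X,\mathbb{Z})\rightarrow H_1(\mathrm{Pr}(C,\epsilon),\mathbb{Z})$ to be surjective integrally. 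Your proposed justification (that $X$ generates $\mathrm{Pr}(C,\epsilon)$ as a group, via simplicity or a monodromy argument for general $(C,\epsilon,P)$) only yields surjectivity of $\mathrm{Jac}(X)\rightarrow\mathrm{Pr}(C,\epsilon)$ --- the map could still factor through a nontrivial isogeny --- and it would import a generality hypothesis that the theorem does not make. The missing input is the Fulton--Lazarsfeld connectedness theorem \cite[Remark 1.9]{fulton1981connectedness}, which gives the integral $H_1$-surjectivity for \emph{any} triple with $\dim X=\beta(g,\bm{a})=1$ directly from the degeneracy-locus structure of $X$.

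The second gap is the exponent computation. Welters's criterion \cite[12.2.2]{zbMATH02120946} determines $e$ by $[X]=\frac{e}{(h-1)!}[\Xi]^{h-1}$ with $h=\dim\mathrm{Pr}(C,\epsilon)=g-1$, so $e$ equals $(g-2)!$ times the coefficient of $\xi^{g-2}$ in $\mathsf{B}(g,\bm{a})$, which is exactly $\deg\mathsf{B}(g,\bm{a})=n_{\bm{a}}$ once $|\bm{a}|=g-2$. You instead set $e$ equal to the intersection number $[X]\cdot\xi=\deg(\Xi|_X)$, which equals $e\cdot(g-1)$ rather than $e$; this is the source of your extraneous factor of $g-1$, and no hook-length identity will remove it. Relatedly, the ``factor of $2$'' you flag as the sharper difficulty is already absorbed into Theorem \ref{thm:deglocus}: the class $\mathsf{B}(g,\bm{a})$ is expressed there in terms of the class $\xi$ of the \emph{principal} polarization on $\mathrm{Pr}^{\pm}(C,\epsilon)$, so no further $\Xi$ versus $2\Xi$ bookkeeping is needed at this stage.
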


A polarized abelian variety $(A, \Xi)$ is said to be a \textit{Prym-Tyurin variety for a curve $X$} if there exists an embedding $\iota_A\colon A\hookrightarrow \mathrm{Jac}(X)$ such that 
the pull-back of the canonical  polarization $\Theta$ on $\mathrm{Jac}(X)$ is
$\iota_A^*[\Theta]=e\,[\Xi]$ for some integer $e$ called the \textit{exponent}  \cite[\S 12.2]{zbMATH02120946}.

Theorem \ref{thm:expPT} translates to the Prym-Brill-Noether setting a result from Ortega \cite{ortega2013brill} showing that for a general curve $C$ of odd genus,  $\mathrm{Jac}(C)$ is isomorphic as a principally polarized abelian variety to a Prym-Tyurin variety for the  Brill-Noether curve. 
Here, $(\mathrm{Jac}(C), \Theta)$ is replaced by $(\mathrm{Pr} (C,\epsilon), \Xi)$, and the Brill-Noether curve by the Prym-Brill-Noether curve.

\smallskip

It would be interesting to extend this study to include vanishing conditions at more fixed points. In the case of  Brill-Noether varieties of line bundles with prescribed vanishing at two points, a new determinantal formula in type A was introduced in \cite{act} to compute their classes. For this, the study of two-pointed Prym-Brill-Noether varieties could lead to the computation of a new degeneracy locus formula in type D.
Also, it would be interesting to compute the motivic invariants of Prym-Brill-Noether varieties, as done for the Brill-Noether varieties in \cite{pp, act, chan2021euler, anderson2020motivic}.

\smallskip

Theorem  \ref{thm:deglocus} is proved in \S\ref{sec:degstr}. We define a pointed Prym-Petri map in \S\ref{sec:pPPmap}, and prove a pointed analogue of the Prym-Petri theorem in \S\ref{sec:pPPThm}.  Theorem \ref{thm:mainintro} follows by combining Proposition \ref{prop:pPPimpliesexpdim} and Theorem \ref{thm:prympetri}. 
 Theorem \ref{thm:expPT} is proved in \S\ref{sec:expPT}.
Throughout, we work over a field of characteristic different from~$2$.


\section{The degeneracy locus structure}
\label{sec:degstr}

We define here the scheme structure for the pointed Prym-Brill-Noether varieties as a degeneracy locus in type D, generalizing the unpointed case from De Concini--Pragacz \cite{de1995class}, also reviewed in Fulton-Pragacz \cite[Ch.~8]{fulton2006schubert}. 
We focus  on the changes required in the pointed case, and refer to \cite{de1995class, fulton2006schubert} for the common aspects.
We prove Theorem  \ref{thm:deglocus} at the end of the section by applying a result from Anderson-Fulton \cite[\S4, Corollary]{anderson2018chern}.

Let $C$ be a genus $g$ curve, and $\epsilon$ a nontrivial {$2$-torsion} point in $\mathrm{Jac}(C)$. Let $\varphi\colon\widetilde{C}\rightarrow C$ be the irreducible \'etale double covering induced by~$\epsilon$. 
The maps used below are 
\[
\begin{tikzcd}
\mathrm{Pic}^{2g-2}(\widetilde{C}) \times \widetilde{C} \ar{d}{\widetilde{\delta}} \ar{r}[]{1\times \varphi}  & \mathrm{Pic}^{2g-2}(\widetilde{C}) \times C \ar{d}{{\delta}} \ar{r}[]{\gamma} & \mathrm{Pic}^{2g-2}(\widetilde{C}) \\
\widetilde{C}\ar{r}{\varphi} & C
\end{tikzcd}
\]
where $\gamma, \delta$, and $\widetilde{\delta}$ are the natural projections.

Let $\mathcal{L}$ be a Poincar\'e line bundle on $\mathrm{Pic}^{2g-2}(\widetilde{C}) \times \widetilde{C}$ 
appropriately normalized (specifically, as in \cite[pg.~93]{fulton2006schubert}).
Consider the  rank two bundle on $\mathrm{Pic}^{2g-2}(\widetilde{C}) \times C$:
\[
\mathcal{E} := (1\times \varphi)_*\, \mathcal{L}.
\]
A major insight used below is provided by a result of Mumford \cite{mumford1974prym}:  there exists a non-degenerate quadratic form 
\begin{equation}
\label{eq:qfE}
\mathcal{E}|_{\mathrm{Pr}^\pm (C,\epsilon)\times C} \longrightarrow \delta^*\omega_C|_{\mathrm{Pr}^\pm (C,\epsilon)\times C}.
\end{equation}

Select a divisor $D:=\sum_i P_i$ on $C$ with $\deg(D)>\!>0$ and $P_i\neq P_j$ for $i\neq j$.
Define
\[
\mathcal{V} := \gamma_* \left( \mathcal{E} (D) / \mathcal{E}(-D)\right) |_{\mathrm{Pr}^\pm (C,\epsilon)}, \qquad
\mathcal{U} := \gamma_* \left( \mathcal{E}  / \mathcal{E}(-D)\right) |_{\mathrm{Pr}^\pm (C,\epsilon)},
\]
where $\mathcal{E} (\pm D):= \mathcal{E}\, \otimes\, \delta^* \mathscr{O}_C(\pm D)$.
One has $\mathcal{U} \subset \mathcal{V}$ with $\mathrm{rank}\left( \mathcal{V} \right)=4\deg(D)$ and $\mathrm{rank}\left( \mathcal{U} \right)=2\deg(D)$.

For $\bm{a}=(0\leq a_0 < a_1< \dots< a_r)$ and a point $P$ in $\widetilde{C}$, define 
\[
\mathcal{E}_i := (1\times \varphi)_*\, \mathcal{L}(-a_{i}\,P)
\]
and
\[
\mathcal{W}_i := \gamma_* \left( \mathcal{E}_i(D)  \right) |_{\mathrm{Pr}^\pm (C,\epsilon)}
\]
for $i=0,\dots,r$. This gives a flag of  bundles 
\[
\mathcal{W}_r \subset \cdots \subset \mathcal{W}_{0} \subset \mathcal{V}
\]
with 
$\mathrm{rank}\left( \mathcal{W}_i  \right) = \chi\left( \mathcal{E}_i(D)\right) = 2\deg (D) - a_{i}$ for each $i$.

Select a point $L$ in $\mathrm{Pr}^\pm (C,\epsilon)$, and define $E:=\varphi_*\,  L$ and $E_i:=\varphi_*\,  L(-a_i\,P)$.
A construction of Mumford \cite{mumford1971theta} (see also \cite[App.~H]{fulton2006schubert}) shows  
\[
H^0( \widetilde{C}, L(-a_i\,P)) = H^0(C, E_i) = U \cap W_i  \subset V,
\]
where 
\[
W_i:=H^0(C,E_i(D)), \,\,\, U:= H^0(C,E/E(D)), \,\,\, V:=H^0(C,E(D)/E(-D)).
\]
Moreover, the non-degenerate quadratic form in \eqref{eq:qfE} induces a non-de\-gen\-er\-ate quadratic form on the vector space $V$, and its  subspaces $U$ and $W_r\subset \dots \subset W_0$ are all isotropic. 
As in \cite{de1995class, fulton2006schubert} the construction globalizes over $\mathrm{Pr}^\pm (C,\epsilon)$, hence 
\[
\gamma_* \,\mathcal{E}_i |_{\mathrm{Pr}^\pm (C,\epsilon)} = \mathcal{U} \cap \mathcal{W}_i  \subset \mathcal{V}
\]
and the  vector bundle $\mathcal{V}$ admits a non-degenerate quadratic form with values in $\mathscr{O}_{\mathrm{Pr}^\pm (C,\epsilon)}$ such that $\mathcal{U}$ and $\mathcal{W}_r\subset \dots \subset \mathcal{W}_0$ are all \textit{isotropic} subbundles. 

We define $\mathrm{V}^{\bm{a}}(C, \epsilon, P)$ as the locus in $\mathrm{Pr}^\pm (C,\epsilon)$ defined by the conditions
\[
\mathrm{rank}\left(\mathcal{W}_i \cap \mathcal{U}\right)\geq r+1-i, \qquad \mbox{for $i=0,\dots,r$}.
\]
This gives $\mathrm{V}^{\bm{a}}(C, \epsilon, P)$ the scheme structure induced by the Schubert variety corresponding to the partition $(a_r, a_{r-1}, \dots, a_0)$ inside  the orthogonal Grassmannian  of $2\deg(D)$-dimensional isotropic subspaces in a $4\deg(D)$-dimensional vector space. We are now ready for:

\begin{proof}[Proof of Theorem \ref{thm:deglocus}]
The first part of the statement follows from the degeneracy locus structure of $\mathrm{V}^{\bm{a}}(C, \epsilon, P)$ given above by means of \cite[Lemma, pg.~108]{fulton2006schubert}.
Consequently, a Pfaffian formula from \cite[\S4, Corollary]{anderson2018chern} in terms of the Chern classes of $\mathcal{U}$ and the $\mathcal{W}_i$'s, and depending on the partition $(a_r, a_{r-1}, \dots, a_0)$, yields a cohomology class supported on $\mathrm{V}^{\bm{a}}(C, \epsilon, P)$.
When $\dim (\mathrm{V}^{\bm{a}}(C, \epsilon, P))=\beta(g,\bm{a})$, the Pfaffian formula computes the class of  $\mathrm{V}^{\bm{a}}(C, \epsilon, P)$.

As in \cite[Lemma 5]{de1995class}, $\mathcal{U}$ has trivial Chern classes,  and $c(\mathcal{W}_i^\vee) = e^{2\xi}$ (since $\deg(D)>\!>0$, this is independent of $i$). 
The resulting Pfaffian formula for arbitrary $(a_r, a_{r-1}, \dots, a_0)$ coincides with a Pfaffian computed in \cite[Prop.~6]{de1995class} times $2^{-\ell(\bm{a})} (2\xi)^{|\bm{a}|}$.
This gives the class $\mathsf{B}(g,\bm{a})$ in \eqref{eq:classB}. 
\end{proof}

\begin{remark}
\label{rmk:mst}
Comparing with \cite[pg.~216]{zbMATH00051939},
 $n_{\bm{a}}$ can also be interpreted as the enumeration of marked shifted tableaux of shape $(a_r, a_{r-1}, \dots, a_0)$ with unmarked diagonal entries. Such tableaux are defined as follows. For $\bm{\lambda}=(\lambda_1, \dots, \lambda_\ell )$ with $\lambda_1  >\cdots >\lambda_\ell >0$, the \textit{marked shifted tableaux} of shape $\bm{\lambda}$ are the standard shifted tableaux of shape $\bm{\lambda}$ (as defined in the introduction) modified by marking some of the entries. Thus, the number of marked shifted tableaux of shape $\bm{\lambda}$ is
 $2^{|\bm{\lambda}|}\, \#\left\{ \mbox{standard shifted tableaux of shape $\bm{\lambda}$} \right\}$. The subset of those tableaux with \textit{unmarked diagonal entries} has size 
 \[
 2^{|\bm{\lambda}|-\ell}\, \#\left\{ \mbox{standard shifted tableaux of shape $\bm{\lambda}$} \right\}.
 \]
This equals   $n_{\bm{a}}$ for the shape $\bm{\lambda}$ given by $(a_r, a_{r-1}, \dots, a_0)$.
\end{remark}

\begin{remark}
\label{rmk:naforunmarked}
When $\bm{a}=(0,1,\dots, r)$, the number $n_{\bm{a}}$ equals the enumeration of standard Young tableaux of shape $(r, r-1, \dots, 1)$, that is, the number of filling of the configuration of $r$ left-aligned rows of boxes, with $r-i+1$ boxes in the $i$-th row, by the numbers $1,2,\dots, \frac{r(r+1)}{2}$, such that the entries are strictly increasing down each column and from left to right along each row. 
\end{remark}


\section{The pointed Prym-Petri map}
\label{sec:pPPmap}

Here we construct a  pointed Prym-Petri map, and  apply it to find an upper bound for the dimension of pointed Prym-Brill-Noether varieties.
The construction integrates the treatments of the unpointed case from \cite{welters1985theorem} and the  pointed Brill-Noether case from \cite{ciliberto1992endomorphisms}.

Let $C$ be a curve of genus $g$, 
and $\epsilon$ a non-trivial $2$-torsion point in $\mathrm{Jac}(C)$. Let $P\in \widetilde{C}$, where $\varphi \colon \widetilde{C} \rightarrow C$ is the irreducible \'etale double covering determined by $\epsilon$.
One has a splitting of the space of differential forms
\[
H^0\left(\widetilde{C}, \omega_{\widetilde{C}} \right) = H^0\left({C}, \omega_{{C}} \right) \oplus H^0\left({C}, \omega_{C}\otimes \epsilon \right)
\]
 into invariants and anti-invariant sections under the action of the covering involution~$\iota$.

The covering $\varphi$ induces a norm map $\mathrm{Nm}\colon \mathrm{Pic}(\widetilde{C})\rightarrow \mathrm{Pic}(C)$. This satisfies $\mathrm{Nm}\circ \varphi^* = 2 \,\mathrm{id}_{\mathrm{Pic}(C)}$ and $\varphi^* \circ \mathrm{Nm}  = \mathrm{id}_{\mathrm{Pic}(\widetilde{C})}\otimes  \iota^*$ \cite{mumford1974prym}.

For $\bm{a}=(0\leq a_0 < a_1< \dots< a_r)$ and $L\in \mathrm{V}^{\bm{a}}(C, \epsilon, P)$ such that $h^0(\widetilde{C}, L(-a_iP))=r+1-i$ for  $i=0,\dots, r$, select sections
\begin{align*}
&\sigma_i \in H^0( \widetilde{C}, L(-a_i \, P))\setminus H^0( \widetilde{C}, L(-a_{i+1} \, P))  \quad\mbox{ for } i=0,\dots, r-1, \\
&\sigma_r \in H^0( \widetilde{C},  L(-a_r \, P)).
\end{align*}
Let $M:=\iota^* L$. Since $\mathrm{Nm}(L)=\omega_C$, one has $\varphi^* \omega_C = L \otimes M$, and since $\varphi^* \omega_C =\omega_{\widetilde{C}}$, one deduces
$M = \omega_{\widetilde{C}}\otimes L^\vee$. 
Consider the composition 
\[
\bigoplus_{i=0}^r \, \langle \sigma_i \rangle  \otimes H^0(\widetilde{C}, M(a_i \, P)) \xrightarrow{\mu}  H^0\left(\widetilde{C}, \omega_{\widetilde{C}} \right)\xrightarrow{p}  H^0\left(C, \omega_C \otimes \epsilon\right)
\]
where $\mu$ is the Petri map obtained by product of sections, and $p$ is the projection onto the  anti-invariant subspace under the action of $\iota$.
Explicitly, the composition $p\circ \mu$ is given by
\[
\sigma\otimes \tau \mapsto  \frac{1}{2}\big(\sigma \tau - \iota(\sigma) \iota(\tau) \big).
\]
For each $i$, one has the following inclusions
\begin{equation}
\label{eq:iotaLiinMi}
\iota^* H^0( \widetilde{C}, L(-a_i \, P)) 
\hookrightarrow \iota^* H^0( \widetilde{C}, L) \xrightarrow{\cong} H^0(\widetilde{C}, M)
\hookrightarrow H^0(\widetilde{C}, M(a_i \, P)).
\end{equation}
In particular, the map $p\circ \mu$ restricts to a map
\[
\overline{\mu}\colon \bigoplus_{i=0}^r  \,\langle \sigma_i \rangle  \otimes H^0(\widetilde{C}, M(a_i \, P)) \Big/ \iota^* H^0( \widetilde{C}, L(-a_i \, P)) \longrightarrow  H^0(C, \omega_C \otimes \epsilon).
\]
Here the source of $\overline{\mu}$ is intended  as a subspace of the source of $p\circ \mu$. We call $\overline{\mu}$ the \textit{pointed Prym-Petri map} for $L$.

For instance, in the case $\bm{a}=(0,1,\dots, r)$ where no special vanishing is imposed at $P$, one has $h^0(\widetilde{C}, M(iP))=h^0(\widetilde{C}, M)$ for each $i$ by the Riemann-Roch formula. Hence the source of $\overline{\mu}$ is isomorphic to the image of $\wedge^2 \, H^0( \widetilde{C}, L)$ via the composition of 
\[
\wedge^2 \, H^0( \widetilde{C}, L) \hookrightarrow H^0( \widetilde{C}, L)^{\otimes 2} \xrightarrow{1\otimes \iota^*} H^0( \widetilde{C}, L) \otimes H^0( \widetilde{C}, M) 
\]
where the first map is given by $\sigma \wedge \tau \mapsto \frac{1}{2}(\sigma\otimes \tau - \tau\otimes \sigma)$. It follows that the map $\overline{\mu}$ recovers the Prym-Petri map in the unpointed case studied in \cite{welters1985theorem}.

\smallskip

Next, we show how the pointed Prym-Petri map encodes information on the tangent space of pointed Prym-Brill-Noether varieties. 
The variety $\mathrm{V}^{\bm{a}}(C, \epsilon, P)$ has a Zariski open subset
\[
\mathrm{V}^{\bm{a}}(C, \epsilon, P)^\circ := W^{\bm{a}}_{2g-2}(\widetilde{C}, P)^\circ \,\cap \, \mathrm{Pr}^\bullet (C,\epsilon)
\]
where $\bullet=+$ if $r$ is odd,  $\bullet=-$ if $r$ is even, and
\[
W^{\bm{a}}_{2g-2}(\widetilde{C}, P)^\circ := \left\{ L\in \mathrm{Pic}^{2g-2}(\widetilde{C}) \, : \, h^0(\widetilde{C}, L(-a_i \, P)) = r+1 -i,  \,\, \forall \, i \right\}.
\]
The scheme structure of $\mathrm{V}^{\bm{a}}(C, \epsilon, P)$ along $\mathrm{V}^{\bm{a}}(C, \epsilon, P)^\circ$ coincides with the natural scheme structure on the intersection 
 $W^{\bm{a}}_{2g-2}(\widetilde{C}, P)^\circ \,\cap \, \mathrm{Pr}^\bullet (C,\epsilon)$, as in \cite[Prop.~4(1)]{de1995class}.
 
 For  $L$   in $\mathrm{V}^{\bm{a}}(C, \epsilon, P)^\circ$, it follows that the tangent space of $\mathrm{V}^{\bm{a}}(C, \epsilon, P)$ at $L$  is the intersection of the tangent spaces of $W^{\bm{a}}_{2g-2}(\widetilde{C}, P)^\circ$ and $\mathrm{Pr}^\bullet (C,\epsilon)$ at $L$.
We proceed to identify these tangent spaces.

As in \cite[pg.~673]{welters1985theorem}, the tangent space of $\mathrm{Pr}^\bullet (C,\epsilon)$ at $L$ is
\begin{equation}
\label{eq:TPL}
T_{\mathrm{Pr}^\bullet (C,\epsilon)} (L) = H^0\left({C}, \omega_{C}\otimes \epsilon \right)^\vee \hookrightarrow H^0\left(\widetilde{C}, \omega_{\widetilde{C}} \right)^\vee
\end{equation}
where the inclusion is the transpose of the projection $p$. This follows from the fact that the tangent map of $\mathrm{Nm}$ at $L$ is canonically given by twice the transpose of the pull-back   $\varphi^*$:
\[
2 (\varphi^*)^t \colon H^0\left(\widetilde{C}, \omega_{\widetilde{C}} \right)^\vee \rightarrow H^0\left({C}, \omega_{C} \right)^\vee.
\]
Moreover, from \cite[Proof of (3.2)]{ciliberto1992endomorphisms}, the tangent space of $W^{\bm{a}}_{2g-2}(\widetilde{C}, P)^\circ$ at $L$ is
\begin{equation}
\label{eq:TWopen}
T_{W^{\bm{a}}_{2g-2}(\widetilde{C}, P)^\circ} (L) = \mathrm{Im}(\mu)^\perp \subset H^0\left(\widetilde{C}, \omega_{\widetilde{C}} \right)^\vee.
\end{equation}
Combining \eqref{eq:TPL} and \eqref{eq:TWopen}, we have 
\begin{align*}
T_{\mathrm{V}^{\bm{a}}(C, \epsilon, P)} (L) &= \mathrm{Im}(\mu)^\perp \cap H^0\left({C}, \omega_{C}\otimes \epsilon \right)^\vee\\
&= \mathrm{Im}(\overline{\mu})^\perp.
\end{align*}
To compute its dimension, note that since $h^0(\widetilde{C}, L(-a_i \, P))=r+1-i$ for $i=0,\dots, r$ and $M=\omega_{\widetilde{C}}\otimes L^\vee$,  one has by the Riemann-Roch formula:
\[
h^0(\widetilde{C}, M(a_i \, P)) = r+1-i+a_i, \qquad \mbox{for $i=0,\dots, r$}.
\]
It follows that the source of $\overline{\mu}$ has  dimension equal to $|\bm{a}|$, hence
\[
\dim \left(\mathrm{Im}(\overline{\mu})^\perp \right)= g-1 - |\bm{a}| + \dim \mathrm{Ker} (\overline{\mu}).
\]
This implies:

\begin{proposition}
\label{eq:upperbounddimV}
For  $L\in \mathrm{V}^{\bm{a}}(C, \epsilon, P)^\circ$, one has  
\[
\dim_L \left( \mathrm{V}^{\bm{a}}(C, \epsilon, P) \right) \leq \beta (g,\bm{a}) + \dim \mathrm{Ker} (\overline{\mu}).
\]
\end{proposition}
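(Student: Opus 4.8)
The plan is to combine the two tangent space identifications already established in the excerpt, so the proof is essentially the concluding line of a dimension count. First I would observe that by the discussion preceding the statement, for $L\in \mathrm{V}^{\bm{a}}(C, \epsilon, P)^\circ$ the scheme structure on $\mathrm{V}^{\bm{a}}(C, \epsilon, P)$ coincides with the natural one on the intersection $W^{\bm{a}}_{2g-2}(\widetilde{C}, P)^\circ \cap \mathrm{Pr}^\bullet (C,\epsilon)$, hence the Zariski tangent space at $L$ is contained in the intersection of the two tangent spaces computed in \eqref{eq:TPL} and \eqref{eq:TWopen}. Intersecting $\mathrm{Im}(\mu)^\perp$ with $H^0(C,\omega_C\otimes\epsilon)^\vee = (\ker p)^\perp$ inside $H^0(\widetilde C,\omega_{\widetilde C})^\vee$ gives $\mathrm{Im}(p\circ\mu)^\perp$; restricting $p\circ\mu$ to the subspace of its source that is the source of $\overline\mu$ does not change the image, because the extra directions in $\iota^* H^0(\widetilde C, L(-a_iP))$ that we quotient out are killed by $p\circ\mu$ (one checks $\sigma\otimes\iota(\tau')$ with $\sigma,\tau'$ sections of $L(-a_iP)$ maps to $\tfrac12(\sigma\iota(\tau')-\iota(\sigma)\tau')$, which lies in the invariant part — this is exactly the content of \eqref{eq:iotaLiinMi} and the explicit formula for $p\circ\mu$). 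Therefore $T_L \mathrm{V}^{\bm a}(C,\epsilon,P) = \mathrm{Im}(\overline\mu)^\perp$.

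Next I would run the dimension bookkeeping. Using $h^0(\widetilde C, L(-a_iP)) = r+1-i$ together with $M = \omega_{\widetilde C}\otimes L^\vee$ and Riemann–Roch, one gets $h^0(\widetilde C, M(a_iP)) = r+1-i+a_i$. The source of the pointed Prym-Petri map $\overline\mu$ is $\bigoplus_{i=0}^r \langle\sigma_i\rangle\otimes H^0(\widetilde C, M(a_iP))\big/\iota^* H^0(\widetilde C, L(-a_iP))$, whose $i$-th summand has dimension $1\cdot(r+1-i+a_i) - (r+1-i) = a_i$, so the total source has dimension $\sum_i a_i = |\bm a|$. Since the target $H^0(C,\omega_C\otimes\epsilon)$ has dimension $g-1$, the image has dimension $|\bm a| - \dim\ker\overline\mu$, and its annihilator inside the $(g-1)$-dimensional space $H^0(C,\omega_C\otimes\epsilon)^\vee$ has dimension $(g-1) - |\bm a| + \dim\ker\overline\mu = \beta(g,\bm a) + \dim\ker\overline\mu$. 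Finally $\dim_L \mathrm{V}^{\bm a}(C,\epsilon,P)\le \dim T_L \mathrm{V}^{\bm a}(C,\epsilon,P)$ gives the asserted inequality.

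The one point deserving genuine care — and the step I expect to be the main obstacle — is the identification $\mathrm{Im}(\mu)^\perp \cap H^0(C,\omega_C\otimes\epsilon)^\vee = \mathrm{Im}(\overline\mu)^\perp$, i.e. checking that passing from $p\circ\mu$ on the full tensor-product source to $\overline\mu$ on the quotiented source genuinely leaves the image unchanged, and that the subspace $\iota^* H^0(\widetilde C, L(-a_iP))\hookrightarrow H^0(\widetilde C, M(a_iP))$ makes sense and is exactly the kernel of the relevant restriction. This is where one must invoke the explicit formula $\sigma\otimes\tau\mapsto \tfrac12(\sigma\tau - \iota(\sigma)\iota(\tau))$ and the chain of inclusions \eqref{eq:iotaLiinMi}: if $\tau = \iota(\tau')$ with $\tau'\in H^0(\widetilde C, L(-a_iP))$ and $\sigma=\sigma_i$, then $\sigma_i\tau = \sigma_i\iota(\tau')$ and $\iota(\sigma_i)\iota(\tau) = \iota(\sigma_i)\tau'$, and the difference $\tfrac12(\sigma_i\iota(\tau') - \iota(\sigma_i)\tau')$ is anti-invariant — wait, one must be careful about signs here; in fact $\iota$ swaps the two factors, so $\iota(\sigma_i\iota(\tau')) = \iota(\sigma_i)\tau'$, showing the difference changes sign under $\iota$, hence lies in the anti-invariant part, not the invariant part. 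The correct statement is instead that these contributions already lie in $\mathrm{Im}(\overline\mu)$, so quotienting is harmless for computing the \emph{image}; equivalently, the source of $\overline\mu$ surjects onto $\mathrm{Im}(p\circ\mu)$. I would spell this out by noting that for each $i$ the composite $\langle\sigma_i\rangle\otimes\iota^* H^0(\widetilde C, L(-a_iP))\to H^0(C,\omega_C\otimes\epsilon)$ factors through the larger summand indexed by any $j$ with the same $\langle\sigma_i\rangle$, and its image is already accounted for; the clean way is simply to record that $\overline\mu$ is \emph{defined} as the map induced by $p\circ\mu$ on the quotient, so its image is a quotient of $\mathrm{Im}(p\circ\mu)$ mapping onto it under the natural surjection — hence equals $\mathrm{Im}(p\circ\mu)$ only once one checks the quotiented directions map into that same image, which is the verification above. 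All of this is routine linear algebra modulo the involution, but it is the part where a careless sign could derail the argument, so it is worth writing out the explicit formula and tracking $\iota$ carefully. Everything else is immediate from \eqref{eq:TPL}, \eqref{eq:TWopen}, Riemann–Roch, and the inequality $\dim_L \le \dim T_L$.
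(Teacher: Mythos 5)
Your argument is correct and takes essentially the same route as the paper: the proposition is precisely the dimension count $\dim T_L\mathrm{V}^{\bm{a}}(C,\epsilon,P)\leq\dim\mathrm{Im}(\overline{\mu})^\perp=g-1-|\bm{a}|+\dim\mathrm{Ker}(\overline{\mu})$ obtained by combining \eqref{eq:TPL}, \eqref{eq:TWopen}, and the Riemann--Roch computation $h^0(\widetilde{C},M(a_iP))=r+1-i+a_i$. The point you agonize over is not actually needed for this inequality --- the trivial containment $\mathrm{Im}(\overline{\mu})\subseteq\mathrm{Im}(p\circ\mu)$ already gives $T_L\mathrm{V}^{\bm{a}}(C,\epsilon,P)\subseteq\mathrm{Im}(\overline{\mu})^\perp$ --- and your self-corrected observation that the quotiented directions land in $\mathrm{Im}(\overline{\mu})$ (via the antisymmetry $(p\circ\mu)(\sigma_i\otimes\iota^*\sigma_j)=-(p\circ\mu)(\sigma_j\otimes\iota^*\sigma_i)$) only becomes essential for the smoothness assertion in Proposition~\ref{prop:pPPimpliesexpdim}.
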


Comparing with the inequality in Theorem \ref{thm:deglocus}, the following condition emerges: 

\begin{definition}
A triple $(C,\epsilon, P)$ as above is said to \textit{satisfy the pointed Prym-Petri condition} if the map $\overline{\mu}$ is injective for all $L\in \mathrm{V}^{\bm{a}}(C, \epsilon, P)$ such that $h^0(\widetilde{C}, L(-a_i \, P))=r+1-i$  for  $i=0,\dots, r$.
\end{definition}

Combining the inequality in Theorem \ref{thm:deglocus} with Proposition \ref{eq:upperbounddimV}, we deduce: 

\begin{proposition}
\label{prop:pPPimpliesexpdim}
 If $(C,\epsilon, P)$ satisfies the pointed Prym-Petri condition, then 
 $\mathrm{V}^{\bm{a}}(C, \epsilon, P)$ is either empty, or smooth of dimension $\beta(g, \bm{a})$ at any $L\in \mathrm{V}^{\bm{a}}(C, \epsilon, P)$ such that $h^0(\widetilde{C}, L(-a_i \, P))=r+1-i$ for  $i=0,\dots, r$.
\end{proposition}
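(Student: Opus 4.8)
The plan is to deduce Proposition~\ref{prop:pPPimpliesexpdim} by combining the two dimension estimates already in hand: the lower bound from the degeneracy locus structure (Theorem~\ref{thm:deglocus}) and the upper bound on the tangent space dimension (Proposition~\ref{eq:upperbounddimV}). First I would observe that the locus $\mathrm{V}^{\bm{a}}(C,\epsilon,P)^\circ$ is an open subscheme of $\mathrm{V}^{\bm{a}}(C,\epsilon,P)$ containing precisely those $L$ with $h^0(\widetilde{C},L(-a_i\,P))=r+1-i$ for all $i$, so it suffices to work locally at such an $L$ and show that $\mathrm{V}^{\bm{a}}(C,\epsilon,P)$ is smooth of dimension $\beta(g,\bm{a})$ there (when nonempty).

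The core of the argument is the chain of inequalities
\[
\beta(g,\bm{a}) \;\leq\; \dim_L\!\left(\mathrm{V}^{\bm{a}}(C,\epsilon,P)\right) \;\leq\; \dim T_{\mathrm{V}^{\bm{a}}(C,\epsilon,P)}(L) \;=\; \beta(g,\bm{a}) + \dim\mathrm{Ker}(\overline{\mu}).
\]
The leftmost inequality is the dimension bound in Theorem~\ref{thm:deglocus}, which applies pointwise at any point of $\mathrm{V}^{\bm{a}}(C,\epsilon,P)$. The middle inequality is the general fact that the local dimension of a scheme at a point is at most the dimension of its Zariski tangent space. The final equality is the tangent space computation carried out just before Proposition~\ref{eq:upperbounddimV}, namely $T_{\mathrm{V}^{\bm{a}}(C,\epsilon,P)}(L)=\mathrm{Im}(\overline{\mu})^\perp$ together with $\dim(\mathrm{Im}(\overline{\mu})^\perp)=g-1-|\bm{a}|+\dim\mathrm{Ker}(\overline{\mu})$. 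Now invoke the pointed Prym-Petri condition: since $L$ satisfies $h^0(\widetilde{C},L(-a_i\,P))=r+1-i$ for all $i$, the hypothesis gives that $\overline{\mu}$ is injective, i.e.\ $\dim\mathrm{Ker}(\overline{\mu})=0$. Substituting, the chain collapses to
\[
\beta(g,\bm{a}) \;\leq\; \dim_L\!\left(\mathrm{V}^{\bm{a}}(C,\epsilon,P)\right) \;\leq\; \dim T_{\mathrm{V}^{\bm{a}}(C,\epsilon,P)}(L) \;=\; \beta(g,\bm{a}),
\]
forcing equality throughout. Equality of the local dimension with the tangent space dimension is exactly the statement that $\mathrm{V}^{\bm{a}}(C,\epsilon,P)$ is smooth at $L$, and its dimension there is $\beta(g,\bm{a})$.

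I do not expect a serious obstacle here: everything needed has been established earlier in the section. The one point requiring a little care is the compatibility of scheme structures — one must use that along $\mathrm{V}^{\bm{a}}(C,\epsilon,P)^\circ$ the degeneracy locus scheme structure agrees with the natural one on $W^{\bm{a}}_{2g-2}(\widetilde{C},P)^\circ \cap \mathrm{Pr}^\bullet(C,\epsilon)$ (already noted via \cite[Prop.~4(1)]{de1995class}), so that the tangent space formula \eqref{eq:TWopen}, \eqref{eq:TPL} genuinely computes the Zariski tangent space of the scheme $\mathrm{V}^{\bm{a}}(C,\epsilon,P)$ and not merely of some set-theoretic model. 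Once that is in place, the proposition is immediate from the squeeze above.
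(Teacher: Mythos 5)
Your proposal is correct and is essentially the paper's own argument: the paper deduces the proposition by combining the lower bound from Theorem \ref{thm:deglocus} with the tangent-space upper bound of Proposition \ref{eq:upperbounddimV}, exactly the squeeze you describe, with the injectivity of $\overline{\mu}$ collapsing the chain and equality of local and tangent-space dimensions giving smoothness. Your remark about the compatibility of the degeneracy-locus scheme structure with the natural one along $\mathrm{V}^{\bm{a}}(C,\epsilon,P)^\circ$ is the same point the paper addresses via \cite[Prop.~4(1)]{de1995class}.
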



\section{The pointed Prym-Petri condition in the general case}
\label{sec:pPPThm}

\noindent  The main result of this section is the following \textit{pointed Prym-Petri Theorem}:

\begin{theorem}
\label{thm:prympetri}
For a general curve $C$ of genus $g$, an arbitrary nontrivial \mbox{$2$-torsion} point $\epsilon$ in $\mathrm{Jac}(C)$, and a general point $P\in \widetilde{C}$, 
the triple $(C,\epsilon, P)$ satisfies the pointed Prym-Petri condition.
\end{theorem}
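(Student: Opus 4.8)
The plan is to follow the classical degeneration strategy of Eisenbud--Harris and Welters, adapted to the pointed setting by inserting the fixed point into the analysis of limit linear series. Since the pointed Prym-Petri condition is an open condition on the moduli of triples $(C,\epsilon,P)$ over the locus of curves with a fixed nontrivial $\epsilon$, it suffices to exhibit a single triple for which $\overline{\mu}$ is injective for every relevant $L$. I would degenerate $\widetilde{C}\to C$ to a cover of a singular (nodal or chain-type) curve, specifically taking $C$ to degenerate to a chain of elliptic curves (as in Welters \cite{welters1985theorem}) with $P$ specializing to a general point on one fixed component, and the double cover $\widetilde C$ correspondingly a connected \'etale cover whose components are elliptic or rational.

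First I would set up the limit linear series picture: a line bundle $L$ on $\widetilde C$ with $\mathrm{Nm}(L)=\omega_C$ and the prescribed vanishing $\bm a$ at $P$ degenerates to a limit $\mathfrak{g}^r_{2g-2}$-type object on the central fiber, with aspects $L_j$ on each component $\widetilde C_j$, and the vanishing sequence at $P$ on its component is bounded below by $\bm a$. The involution $\iota$ acts on the whole configuration, so the ``Prym'' structure $M=\omega_{\widetilde C}\otimes L^\vee=\iota^*L$ persists componentwise. The key point, as in the unpointed Prym-Petri theorem, is that the source of $\overline\mu$ filters according to the components of $\widetilde C$ (via orders of vanishing of the $\sigma_i$ and of the sections of $M(a_iP)$ at the nodes), and the associated graded pieces of $\overline\mu$ decompose into a sum of maps that are essentially the Petri maps on the elliptic/rational components. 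I would then argue that on each such component the relevant (pointed, anti-invariant) multiplication map is injective — for elliptic curves this is the classical Petri-type computation with an extra base point, for rational components it is linear algebra on polynomials — and that the filtration is separated enough (compatibility of vanishing orders, the ``additivity'' $\sum$ of vanishing sequences equals $g-1-|\bm a|$ in the expected-dimension case) that injectivity on the graded pieces forces injectivity of $\overline\mu$ itself.

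The key steps, in order, are: (1) reduce to a single good triple by openness; (2) choose the degeneration of $(C,\epsilon,P)$ — a chain (or compact-type) curve with $P$ on a distinguished component, and a connected \'etale double cover of it; (3) describe limits of $L\in \mathrm V^{\bm a}(C,\epsilon,P)^\circ$ as $\iota$-equivariant limit linear series with imposed ramification $\bm a$ at $P$, and translate the injectivity of $\overline\mu$ into a statement about the associated ``pointed Prym-Petri map'' of the limit; (4) filter the source of $\overline\mu$ by vanishing order at the nodes, identify the graded pieces with component-wise anti-invariant multiplication maps twisted by $a_iP$; (5) check injectivity of each graded piece on elliptic and rational components, using a genericity choice of the $\epsilon$-twist and of $P$ on its component; (6) assemble: a nonzero element of $\ker\overline\mu$ would have nonzero image in some graded piece, contradiction. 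Throughout I would lean on Welters \cite{welters1985theorem} for the unpointed graded-piece analysis and on Ciliberto--Harris--Teixidor-i-Bigas \cite{ciliberto1992endomorphisms} and Eisenbud--Harris \cite{MR723217} for the bookkeeping of vanishing sequences at the marked point under degeneration.

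The main obstacle I anticipate is step (4)–(5): making the filtration argument genuinely compatible with \emph{both} the $\iota$-action (which is what cuts $H^0(\widetilde C,\omega_{\widetilde C})$ down to the anti-invariant part $H^0(C,\omega_C\otimes\epsilon)$) \emph{and} the twist by $a_iP$ at the marked point, so that the graded pieces really are injective and no cross-terms survive. In the unpointed case the balance $\beta(g,\bm a)=g-1$ with $\bm a=(0,\dots,r)$ makes the dimension count tight on the nose; with a general $\bm a$ one must verify that the ramification imposed at $P$ ``uses up'' exactly $|\bm a|$ of the available vanishing budget on the components between $P$ and the node structure, which requires a careful choice of which component carries $P$ and a precise inductive count of vanishing sequences — this is where the argument is most delicate and where I would expect to spend the bulk of the work. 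A secondary subtlety is ensuring the chosen double cover of the degenerate curve is irreducible (connected), so that it is an honest limit of the irreducible \'etale covers $\widetilde C\to C$ and the Prym structure degenerates correctly.
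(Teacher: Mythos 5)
Your overall strategy coincides with the paper's: reduce by openness and irreducibility to a single triple (Lemma~\ref{lem:red2ex}); degenerate $C$ to a chain of elliptic and rational curves with $P$ specializing onto a rational component at one end; extend the line bundles $\mathscr{L}^i_\eta$, $\mathscr{M}^i_\eta$ over the total space via the twists of limit linear series theory \cite{MR723217}; and propagate orders of vanishing of a putative kernel element along the chain as in \cite{MR723217, ciliberto1992endomorphisms, welters1985theorem}. Up to that point your plan is essentially the paper's.

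The gap is exactly at the step you flag as the ``main obstacle'': how the $\iota$-action and the quotient by $\iota^*H^0(\widetilde C, L(-a_iP))$ enter the degeneration argument. Your plan distributes the anti-invariance over all components (``the relevant (pointed, anti-invariant) multiplication map is injective'' on each graded piece), but this cannot work as stated: the paper chooses $\epsilon$ to restrict nontrivially only to the last elliptic component $E_g$, so over every other component the double cover \emph{splits} into $E_k'\sqcup E_k''$, the involution swaps the two sheets, and there is no local anti-invariant Petri map there — only the ordinary vanishing-order propagation (Lemma~\ref{lem:ordPP}), which requires the non-torsion condition on the node differences $P_k-Q_k$ that your setup does not impose. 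The Prym structure is then confronted \emph{only} on the single irreducible piece $\widetilde E_g$, where $P_g'-P_g''$ is $2$-torsion and the propagation inequality yields nothing further (the paper notes this explicitly). The resolution there is not injectivity of a graded piece: in Lemma~\ref{lem:zeroconclusion} one shows, by an explicit choice of basis of sections of $\mathscr{M}_{\widetilde E_g}\otimes\mathscr{O}_{\widetilde E_g}(a_rP_g')$ with distinct vanishing orders at both $P_g'$ and $P_g''$, that any element vanishing to order $\geq 2g-2$ at \emph{both} points is forced to lie in $\langle\overline\sigma_i\rangle\otimes\iota^*L^i$ — i.e.\ precisely in the subspace quotiented out in the source of $\overline\mu$ — and hence is zero in the quotient. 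Using both preimages of the node of $E_g$ and the relation $2P_g'\equiv 2P_g''$ is the idea your proposal is missing; without it the endgame ``nonzero image in some graded piece, contradiction'' does not go through.
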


Proposition \ref{prop:pPPimpliesexpdim} and Theorem \ref{thm:prympetri} directly imply Theorem \ref{thm:mainintro}. For the proof of Theorem \ref{thm:prympetri}, we adapt and refine ideas from the unpointed case treated in \cite{welters1985theorem} and the adjustment of the classical Brill-Noether case \cite{MR723217} to the pointed Brill-Noether case treated in \cite{ciliberto1992endomorphisms}.
First, we argue that:

\begin{lemma}
\label{lem:red2ex}
Exhibiting \textit{a single} triple $(C,\epsilon, P)$ for each genus  with $\epsilon\neq 0$ 
which satisfies the pointed Prym-Petri condition proves Theorem \ref{thm:prympetri}.
\end{lemma}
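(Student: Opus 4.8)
The plan is to prove Lemma \ref{lem:red2ex} by a standard semicontinuity and specialization argument, showing that the pointed Prym-Petri condition is an open condition on a suitable parameter space, so that its validity at a single point forces it on a dense open set, which then contains the general triple. First I would set up the relevant moduli space: fix the genus $g$ and consider a family parametrizing triples $(C, \epsilon, P)$, namely a space whose points are smooth genus-$g$ curves $C$ equipped with a nontrivial $2$-torsion point $\epsilon \in \mathrm{Jac}(C)$ and a point $P$ on the associated double cover $\widetilde{C}$. This is (a finite cover of) $\mathcal{R}_g$ with a marked point on the cover added, and it is irreducible; the double covers $\widetilde{C}\to C$ and the Prym varieties $\mathrm{Pr}^\pm(C,\epsilon)$ glue into families over its base. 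Over this base one has, relative to the universal curve, the relative pointed Prym-Brill-Noether locus $\mathrm{V}^{\bm{a}}$ together with its open subset $\mathrm{V}^{\bm{a},\circ}$ where $h^0(\widetilde{C}, L(-a_iP)) = r+1-i$ for all $i$, and over $\mathrm{V}^{\bm{a},\circ}$ the bundles $\langle\sigma_i\rangle$ cannot be globalized, but the source and target of the pointed Prym-Petri map $\overline{\mu}$ do form vector bundles of constant rank (the source has rank $|\bm{a}|$ by the Riemann-Roch computation in \S\ref{sec:pPPmap}, the target has rank $g-1$), and the collection of maps $\overline{\mu}$ assembles into a morphism of vector bundles over $\mathrm{V}^{\bm{a},\circ}$ — up to the usual subtlety that the individual sections $\sigma_i$ are only defined up to scalar and up to adding sections of deeper subspaces, which does not affect injectivity of $\overline{\mu}$.

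Next I would invoke semicontinuity: the locus in $\mathrm{V}^{\bm{a},\circ}$ where $\overline{\mu}$ fails to be injective is closed, hence its image in the base is a constructible set, and — because $\mathrm{V}^{\bm{a}}(C,\epsilon,P)$ can have dimension larger than expected only where $\overline{\mu}$ is non-injective (Proposition \ref{eq:upperbounddimV}) — one needs a little care that the fibers behave well, but the key point is simply that the set of triples $(C,\epsilon, P)$ for which the pointed Prym-Petri condition holds is an \emph{open} subset $\mathcal{O}$ of the (irreducible) base. Indeed, failure of the condition for some $L$ in the relevant open stratum of the fiber is detected by the drop of rank of the bundle map $\overline{\mu}$, a closed condition, whose image under the proper-enough projection is constructible; upgrading ``constructible'' to the complement of a closed set requires knowing that the bad locus is closed, which follows once one restricts to the locus where $\mathrm{V}^{\bm{a},\circ}$ itself is proper over the base (the points $L$ with the exact cohomology conditions form a bounded family since degrees are fixed) and uses properness of that family to push forward the closed degeneracy locus. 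The upshot is: $\mathcal{O}$ is open, and it is nonempty precisely when a single triple lying in it exists. Since the base is irreducible, a nonempty open set is dense and in particular contains the general triple with $\epsilon \neq 0$, which is exactly the conclusion of Theorem \ref{thm:prympetri}.

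I would also spell out the one genuinely Prym-specific wrinkle in the bookkeeping: ``$\epsilon$ arbitrary nontrivial'' in the statement of Theorem \ref{thm:prympetri} means one wants the condition for \emph{every} nonzero $2$-torsion point once $C$ and $P$ are general, not merely for a general $\epsilon$. This is handled by noting that the forgetful map from the base to $\mathcal{M}_g$ is finite (the $2$-torsion points of $\mathrm{Jac}(C)$ form a finite set, of size $2^{2g}-1$ after removing $0$), so the image of the bad closed locus in $\mathcal{M}_g$ is still closed, and its complement is a dense open $\mathcal{U} \subset \mathcal{M}_g$; for $C \in \mathcal{U}$ and $P$ general the condition holds for all $\epsilon \neq 0$ simultaneously. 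Equivalently, one runs the openness argument on the base as stated and then takes the intersection over the finitely many sheets.

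The main obstacle I anticipate is not the openness/irreducibility formalism — that is routine — but rather the construction, carried out in the \emph{next} sections via limit linear series à la Eisenbud--Harris, of the required single example: one must produce, for each $g$, one triple $(C,\epsilon, P)$ with $\epsilon \neq 0$ for which $\overline{\mu}$ is provably injective, typically by degenerating to a nodal or chain-of-elliptic-curves target, controlling the limits of the sections $\sigma_i$ and of $H^0(\widetilde C, M(a_iP))$, and checking that the limiting Prym-Petri map is injective. Lemma \ref{lem:red2ex} itself only asserts that \emph{one} such example suffices, so for this lemma the work is entirely the reduction above; but I should be careful that the parameter space I use genuinely dominates the one implicit in Theorem \ref{thm:prympetri} and that the relevant loci are of the expected type over it, so that the single constructed example indeed lands in the open set $\mathcal{O}$ rather than on its boundary.
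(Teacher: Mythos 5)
Your argument is essentially identical to the paper's proof: the paper disposes of the lemma in two lines by citing Welters \cite[(2.1)]{welters1985theorem} for the openness of the pointed Prym-Petri condition on families of triples and then invoking the irreducibility of the moduli space of triples $(C,\epsilon,P)$ with $\epsilon\neq 0$, which is exactly your openness-plus-irreducibility skeleton (your finite-cover remark over $\mathcal{M}_g$ correctly unpacks how ``arbitrary nontrivial $\epsilon$'' follows from irreducibility). The only caveat is that your justification of openness via ``properness of $\mathrm{V}^{\bm{a},\circ}$ over the base'' is not right as stated --- that locus is open in the proper relative Prym-Brill-Noether scheme, not itself proper --- but this is precisely the point the paper delegates to Welters rather than reproving, so it does not change the architecture of the reduction.
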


\begin{proof}
Satisfying the pointed Prym-Petri condition is an open condition on families of triples $(C, \epsilon, P)$, as in \cite[(2.1)]{welters1985theorem}. The statement follows from the irreducibility of the moduli space of triples $(C, \epsilon, P)$ of fixed genus  with~$\epsilon\neq 0$.
\end{proof}

For this, we consider a specific triple 
obtained as the geometric generic fiber of a family for each genus defined as follows.

\subsection{The family $(\mathscr{C}, \epsilon, P)$}
Let $T$ be the spectrum of a discrete valuation ring with parameter $t$,  closed point $0$, and generic point $\eta$. We will later use  the fact that $T$ has trivial Picard group.
Let $\pi\colon\mathscr{C}\rightarrow T$ be a flat projective family, with $\mathscr{C}$ a smooth surface, such that: 1) the generic fiber $\mathscr{C}_\eta$ is smooth and geometrically irreducible; and 2) the special fiber $\mathscr{C}_0$ is a reduced curve of arithmetic genus $g$ consisting of a chain of smooth components which are either rational or elliptic, and ordinary double points as only singularities (see Figure ~\ref{fig:}). Let $E_1,\dots, E_g$ be the elliptic components of $\mathscr{C}_0$. For each $i=1,\dots, g$, we assume furthermore that
$E_i$ meets the rest of the curve $\mathscr{C}_0$ in  two points $P_i$ and $Q_i$ such that $P_i-Q_i$ is not a torsion point in $\mathrm{Pic}(E_i)$.

We will repeatedly use the following property:
Given a dominant morphism $T'\rightarrow T$ of spectra of discrete valuation rings, the family \mbox{$\mathscr{C}'\rightarrow T'$} obtained by base extension and minimal resolution of singularities has special fiber $\mathscr{C}'_0$ satisfying the same assumptions as $\mathscr{C}_0$.

Up to extending the base, we can assume that there exists a  line bundle $\epsilon$ on $\mathscr{C}$ such that $\epsilon^2 \cong \mathscr{O}_{\mathscr{C}}$, with nontrivial restriction $\epsilon_\eta$ on $\mathscr{C}_\eta$, and with restriction to $\mathscr{C}_0$ nontrivial only over $E_g$. The line bundle $\epsilon$ gives rise to an irreducible \'etale double covering $\varphi\colon \widetilde{\mathscr{C}} \rightarrow \mathscr{C}$ over $T$, where \mbox{$\widetilde{\mathscr{C}}:=\mathrm{Spec}(\mathscr{O}_{\mathscr{C}}\oplus \epsilon)$,} the ring structure induced by $\epsilon^2 \cong \mathscr{O}_{\mathscr{C}}$. Let $\iota$ be the covering involution on $\widetilde{\mathscr{C}}$.
The  generic fiber $\widetilde{\mathscr{C}}_\eta$ is smooth and geometrically irreducible, and the special fiber $\widetilde{\mathscr{C}}_0$ is a reduced curve of arithmetic genus $2g-1$ consisting of a chain of smooth components which are either rational or elliptic, and ordinary double points as only singularities. The elliptic component can be denoted $E'_1, E''_1, \dots, E'_{g-1}, E''_{g-1}, \widetilde{E}_g$ such that for $i=1,\dots, g-1$, the restriction of the map $\varphi$ over $E_i$ is a reducible double covering $E'_i\sqcup E''_i \rightarrow E_i$, hence 
 $E'_i\cong E''_i \cong E_i$, and the restriction of the map $\varphi$ over $E_g$ is an irreducible double covering $\widetilde{E}_g\rightarrow E_g$ (see Fig.~\ref{fig:}). In particular, for each of the two points where $E_g$ meets the rest of the curve $\mathscr{C}_0$, the difference of the two preimages is a $2$-torsion point in $\mathrm{Jac}( \widetilde{E}_g)$.

Finally, we select a section $P\colon T \rightarrow \widetilde{\mathscr{C}}$ with the  property:  the corresponding point $P_0$ in $\widetilde{\mathscr{C}}_0$ lies in a rational component splitting the chain $\widetilde{\mathscr{C}}_0$ so that there is  a connected component of arithmetic genus $2g-1$ (as in Fig.~\ref{fig:}).

\begin{figure}[htb]

\begin{tikzpicture}

\draw[rotate=25] (-5.5,2.2) ellipse (1.cm and 0cm);

\node at (-4.5,-0.5) {\ldots}; 

\draw[rotate=-25] (-2.5,-1.5) ellipse (1.cm and 0cm);
\draw[rotate=25] (-2,0.5) ellipse (1.cm and 0cm);
\draw (0,0) ellipse (1.5cm and 1cm)
node[auto, below=0.7, label={0:$E''_1$}]{};
\node at (-1.5, -0.2) {$\bullet$};
\node at (-1.5, -1.8) {$P''_1$};
\draw[rotate=-25] (2.,0.5) ellipse (1.cm and 0cm);
\draw[rotate=25] (2.5,-1.5) ellipse (1.cm and 0cm);

\node at (4.5,-0.5) {\ldots};

\draw[rotate=-25] (5.5,2.2) ellipse (1.cm and 0cm);
\draw[rotate=25] (6,-3.2) ellipse (1.cm and 0cm);
\draw (8.5,0) ellipse (1.5cm and 1cm)
node[auto, below=0.7, label={0:$E''_{g-1}$}]{};
\node at (7.1, -0.3) {$\bullet$};
\node at (7, -1.8) {$P''_{g-1}$};
\draw[rotate=-25] (9.5,4) ellipse (1.cm and 0cm);
\draw[rotate=25] (10.,-5) ellipse (1.cm and 0cm);

\node at (12.7,-0.5) {\ldots};

\draw[rotate=-25] (13,5.7) ellipse (1.cm and 0cm);
\draw[rotate=25] (13.5,-6.8) ellipse (1.cm and 0cm);
\node at (15.5, -0.3) {$\bullet$};
\node at (15, -1.8) {$P''_{g}$};
   
\node at (20,-0.5) {\ldots};

 \begin{scope}[xshift=450, yshift=30, xscale=1.5, yscale=1.2]
\draw  (0,0) arc(-225:45:1) -- (0,{sqrt(2)}) arc (225:-45:1) -- cycle;
\end{scope}

\node at (18,-1.9) {$\widetilde{E}_g$};

\begin{scope}[xshift=0, yshift=100 ]

\node at (-6.2, -0.5) {$\bullet$};
\node at (-6.2, 1.) {$P_0$};

\draw[rotate=25] (-5.5,2.2) ellipse (1.cm and 0cm);

\node at (-4.5,-0.5) {\ldots}; 

\draw[rotate=-25] (-2.5,-1.5) ellipse (1.cm and 0cm);
\draw[rotate=25] (-2,0.5) ellipse (1.cm and 0cm);
\draw (0,0) ellipse (1.5cm and 1cm)
node[auto, above=0.7, label={0:$E'_{1}$}]{};
\node at (-1.5, -0.2) {$\bullet$};
\node at (-1.5, 1.8) {$P'_1$};
\draw[rotate=-25] (2.,0.5) ellipse (1.cm and 0cm);
\draw[rotate=25] (2.5,-1.5) ellipse (1.cm and 0cm);

\node at (4.5,-0.5) {\ldots};

\draw[rotate=-25] (5.5,2.2) ellipse (1.cm and 0cm);
\draw[rotate=25] (6,-3.2) ellipse (1.cm and 0cm);
\draw (8.5,0) ellipse (1.5cm and 1cm)
node[auto, above=0.7, label={0:$E'_{g-1}$}]{};
\node at (7.1, -0.3) {$\bullet$};
\node at (7, 1.8) {$P'_{g-1}$};
\draw[rotate=-25] (9.5,4) ellipse (1.cm and 0cm);
\draw[rotate=25] (10.,-5) ellipse (1.cm and 0cm);

\node at (12.7,-0.5) {\ldots};

\draw[rotate=-25] (13,5.7) ellipse (1.cm and 0cm);
\draw[rotate=25] (13.5,-6.8) ellipse (1.cm and 0cm);
\node at (15.5, -0.3) {$\bullet$};
\node at (15, 1.8) {$P'_{g}$};
   
\node at (20,-0.5) {\ldots}; 

\end{scope}

\end{tikzpicture}

$\downarrow$

\begin{tikzpicture}

\draw[rotate=25] (-5.5,2.2) ellipse (1.cm and 0cm);

\node at (-4.5,-0.5) {\ldots}; 

\draw[rotate=-25] (-2.5,-1.5) ellipse (1.cm and 0cm);
\draw[rotate=25] (-2,0.5) ellipse (1.cm and 0cm);
\draw (0,0) ellipse (1.5cm and 1cm)
node[auto, below=0.7, label={0:$E_{1}$}]{};
\node at (-1.5, -0.2) {$\bullet$};
\node at (-1.5, -1.8) {$P_1$};
\draw[rotate=-25] (2.,0.5) ellipse (1.cm and 0cm);
\draw[rotate=25] (2.5,-1.5) ellipse (1.cm and 0cm);

\node at (4.5,-0.5) {\ldots};

\draw[rotate=-25] (5.5,2.2) ellipse (1.cm and 0cm);
\draw[rotate=25] (6,-3.2) ellipse (1.cm and 0cm);
\draw (8.5,0) ellipse (1.5cm and 1cm)
node[auto, below=0.7, label={0:$E_{g-1}$}]{};
\node at (7.1, -0.3) {$\bullet$};
\node at (7, -1.8) {$P_{g-1}$};
\draw[rotate=-25] (9.5,4) ellipse (1.cm and 0cm);
\draw[rotate=25] (10.,-5) ellipse (1.cm and 0cm);

\node at (12.7,-0.5) {\ldots};

\draw[rotate=-25] (13,5.7) ellipse (1.cm and 0cm);
\draw[rotate=25] (13.5,-6.8) ellipse (1.cm and 0cm);
\draw (17,0) ellipse (1.5cm and 1cm)
node[auto, below=0.7, label={0:$E_{g}$}]{};
\node at (15.5, -0.3) {$\bullet$};
\node at (15, -1.8) {$P_{g}$};
   
\node at (20,-0.5) {\ldots}; 
   
\end{tikzpicture}  

    \caption{\small{A graphical representation of the curves ${\mathscr{C}}_0$, $\widetilde{\mathscr{C}}_0$, and the \'etale double covering $\widetilde{\mathscr{C}}_0\rightarrow {\mathscr{C}}_0$. Ellipses and  line segments stand for elliptic and rational components, respectively.}}
    \label{fig:}
\end{figure}

The maps are summarized in the following  diagram:
\[
\begin{tikzcd}
\arrow[loop left]{l}{\iota} \widetilde{\mathscr{C}} \ar{rr}{\varphi} \ar{dr}[]{\widetilde{\pi}} & &\mathscr{C}\ar{dl}{\pi}\\
& T \arrow[rightarrow, bend left=45]{ul}{P}
\end{tikzcd}
\]

\subsection{On the triple $(\mathscr{C}_\eta, \epsilon_\eta, P_\eta)$}
To show Theorem \ref{thm:prympetri}, it is enough to prove that the geometric generic fiber $(\mathscr{C}_{\overline{\eta}}, \epsilon_{\overline{\eta}}, P_{\overline{\eta}})$ of $(\mathscr{C}, \epsilon, P)$ satisfies the pointed Prym-Petri condition, where $\mathscr{C}_{\overline{\eta}}:= \mathscr{C}_\eta \otimes \overline{k(\eta)}$, $\epsilon_{\overline{\eta}} :=\epsilon_{\eta}\otimes \overline{k(\eta)}$, and $P_{\overline{\eta}}$ is the point in $\widetilde{\mathscr{C}}_{\overline{\eta}}:= \widetilde{\mathscr{C}}_\eta \otimes \overline{k(\eta)}$ induced by $P$.
As in \cite[pg.~272]{MR723217}, a line bundle  on $\mathscr{C}_{\overline{\eta}}$ comes from a line bundle defined over some finite extension of $k(\eta)$. 
After extending the base and changing the notation, it is enough to prove:

\begin{theorem}
\label{thm:prympetriateta}
The triple $(\mathscr{C}_\eta, \epsilon_\eta, P_\eta)$ satisfies the pointed Prym-Petri condition.
\end{theorem}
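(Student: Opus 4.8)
The plan is to prove injectivity of the pointed Prym-Petri map $\overline{\mu}$ for every line bundle $L$ on $\mathscr{C}_\eta$ in the relevant locus by specializing to the central fiber and applying the theory of limit linear series. I would proceed as follows. First, let $L_\eta$ be a line bundle on $\widetilde{\mathscr{C}}_\eta$ with $\mathrm{Nm}(L_\eta)=\omega_{\mathscr{C}_\eta}$ and $h^0(\widetilde{\mathscr{C}}_\eta, L_\eta(-a_i P_\eta))=r+1-i$ for all $i$. Using that $T$ has trivial Picard group and $\widetilde{\mathscr{C}}\to T$ is a regular family, extend $L_\eta$ to a line bundle $\mathscr{L}$ on $\widetilde{\mathscr{C}}$ (after a further base change if necessary, which by the property recorded in \S3.1 does not disturb the shape of the special fiber); its restriction $L_0$ to $\widetilde{\mathscr{C}}_0$ is a limit of $L_\eta$. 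The norm condition $\mathrm{Nm}(L_\eta)=\omega_{\mathscr{C}_\eta}$ together with $\varphi^*\omega_{\mathscr{C}}=\omega_{\widetilde{\mathscr{C}}}$ forces $M_\eta:=\iota^* L_\eta = \omega_{\widetilde{\mathscr{C}}_\eta}\otimes L_\eta^\vee$, and this relation propagates to the limit; so both $L_0$ and $M_0=\iota^*L_0$ carry limit linear series on $\widetilde{\mathscr{C}}_0$, with the vanishing-sequence data at the nodes constrained by the compatibility (additivity) inequalities of Eisenbud--Harris.

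The heart of the argument is the reduction of injectivity of $\overline{\mu}$ over $\eta$ to a statement on the special fiber. By upper semicontinuity, if the sections $\sigma_i$ and the elements of $H^0(\widetilde{\mathscr{C}}_\eta, M_\eta(a_i P_\eta))$ specialize to sections on $\widetilde{\mathscr{C}}_0$ whose images under the limit Prym-Petri pairing $\sigma\otimes\tau\mapsto \tfrac12(\sigma\tau-\iota(\sigma)\iota(\tau))$ remain linearly independent in the anti-invariant part $H^0(\mathscr{C}_0,\omega_{\mathscr{C}_0}\otimes\epsilon_0)$ of the relevant twist of $\omega_{\widetilde{\mathscr{C}}_0}$, then $\overline\mu$ was injective to begin with. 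So it suffices to verify that the induced map on the special fiber — a "pointed Prym-Petri map for $L_0$" built from the aspects of the limit linear series at each component — has trivial kernel. This I would do component by component: a nonzero element of the kernel would, after restricting to a suitable subcurve, have to produce a nonzero anti-invariant section of a line bundle on one of the elliptic components $\widetilde E_g$, $E'_i$, $E''_i$ or on a rational component, that nonetheless vanishes; one then derives a contradiction from the genericity hypotheses — the fact that $P_i-Q_i$ is non-torsion on $E_i$, the fact that the two preimages on $E_g$ differ by a $2$-torsion point of $\mathrm{Jac}(\widetilde E_g)$, and the placement of $P_0$ on a rational component. This is exactly the mechanism by which \cite{welters1985theorem} handles the unpointed case and \cite{ciliberto1992endomorphisms} handles the pointed Brill-Noether case; the present proof splices the two, tracking simultaneously the ramification sequence $\bm{a}$ at $P_0$ and the action of the involution $\iota$ on the limit series.

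The main obstacle I expect is bookkeeping the limit-linear-series data compatibly with the involution: one must show that $\iota$ acts on the collection of $\mathfrak{g}^r_d$-aspects of $L_0$ in a way that exchanges $E'_i\leftrightarrow E''_i$ and preserves $\widetilde E_g$, so that the anti-invariant projection $p$ makes sense node-by-node and the dimension count $\dim(\text{source of }\overline\mu)=|\bm{a}|$ is matched by an equally sharp count on $\widetilde{\mathscr{C}}_0$ that leaves no room for a kernel. Concretely, the delicate point is the local analysis at $\widetilde E_g$, where the covering is connected: here the anti-invariant sections genuinely live, the $2$-torsion hypothesis must be invoked to rule out spurious relations, and the vanishing orders at $P_g',P_g''$ imposed by the chain must be reconciled with those forced by the ramification $\bm{a}$ transported down the chain from $P_0$. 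Once that single component is understood, the remaining components contribute in the same way as in the cited unpointed and pointed Brill-Noether arguments, and assembling the local contradictions yields injectivity of $\overline\mu$, hence the theorem.
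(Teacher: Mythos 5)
Your high-level strategy --- degenerate to the chain of elliptic curves, invoke limit linear series, and isolate $\widetilde{E}_g$ as the critical component where the $2$-torsion enters --- matches the paper's. But the reduction to the special fiber is where your proposal has a genuine gap. You propose to show that a ``limit Prym-Petri map'' on $\widetilde{\mathscr{C}}_0$ is injective and conclude by semicontinuity. This is not how the argument can go: there is no single extension of the line bundles over $T$ (one needs a different twist $\mathscr{L}_Y$ for each component $Y$), and the specialization of $\overline{\mu}_\eta$ to the central fiber is typically \emph{not} injective, because products of specialized sections degenerate. The paper instead argues by contradiction on a hypothetical nonzero kernel element $\rho$: it is rescaled by $t^\gamma$ so that its image $\overline{\rho}$ in $\bigoplus_i \langle\overline{\sigma}_i\rangle\otimes M^i\big/\iota^* L^i$ on $\widetilde{E}_g$ is nonzero, and the two quantitative steps missing from your sketch are then (i) Lemma \ref{lem:ordPP}: the kernel condition propagates along the chain via the Eisenbud--Harris-type inequality \eqref{eq:EHineq}, gaining $2$ at each of the $g-1$ elliptic components on each branch of the cover, forcing $\mathrm{ord}_{P'_g}(\overline{\rho})\geq 2g-2$ and $\mathrm{ord}_{P''_g}(\overline{\rho})\geq 2g-2$; and (ii) Lemma \ref{lem:zeroconclusion}: an explicit basis of sections on $\widetilde{E}_g$ with distinct vanishing orders at \emph{both} nodes (built from the divisors \eqref{eq:Divs4es}, using $2P'_g\equiv 2P''_g$) shows that any $\overline{\rho}$ with those vanishing orders lies in $\bigoplus_i\langle\overline{\sigma}_i\rangle\otimes\iota^* L^i$, i.e., is zero in the quotient. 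Your proposal gestures at ``reconciling vanishing orders'' but supplies neither the inequality that produces the bound $2g-2$ nor the elliptic-curve computation that converts it into membership in $\iota^* L^i$; the ``additivity inequalities'' you cite constrain the vanishing sequences of the limit series itself, which is a different (and insufficient) piece of information.

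A second, smaller misreading: the contradiction on $\widetilde{E}_g$ is not that some anti-invariant section ``nonetheless vanishes.'' The element $\overline{\rho}$ need not vanish on $\widetilde{E}_g$; it is forced into the subspace $\iota^* L^i$ that is quotiented out in the source of $\overline{\mu}$. Relatedly, the $2$-torsion of $P'_g-P''_g$ is not a genericity hypothesis used to ``rule out spurious relations'': it is precisely what makes $\mathscr{L}_{\widetilde{E}_g}\otimes\mathscr{O}_{\widetilde{E}_g}$ and $\mathscr{M}_{\widetilde{E}_g}\otimes\mathscr{O}_{\widetilde{E}_g}$ isomorphic (both $\iota$-invariant preimages of $\mathscr{O}_{E_g}((2g-2)P_g)$, as in \eqref{eq:LEgOEg}), which is why the chain-propagation argument yields nothing further at $\widetilde{E}_g$ and an explicit computation on that component becomes unavoidable.
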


Theorem \ref{thm:prympetri} thus follows from Lemma \ref{lem:red2ex} and Theorem \ref{thm:prympetriateta}. The remainder of this section is dedicated to the proof of Theorem \ref{thm:prympetriateta}.

We proceed to describe the relevant pointed Prym-Petri map.
Let $\mathscr{L}_\eta$ in $\mathrm{V}^{\bm{a}}(\mathscr{C}_\eta, \epsilon_\eta, P_\eta)$ and $\mathscr{M}_\eta:=\iota^* \mathscr{L}_\eta$. Consider the sequence of sub--line bundles
\[
\mathscr{L}^r_\eta \hookrightarrow \cdots \hookrightarrow \mathscr{L}^0_\eta \hookrightarrow \mathscr{L}_\eta, \qquad
\mathscr{M}_\eta \hookrightarrow \mathscr{M}^0_\eta \hookrightarrow \cdots \hookrightarrow \mathscr{M}^r_\eta,
\]
where
\begin{equation}
\label{eq:LietaMieta}
\mathscr{L}^i_\eta := \mathscr{L}_\eta(-a_i \, P_\eta), \qquad
\mathscr{M}^i_\eta:= \mathscr{M}_\eta(a_i \, P_\eta).
\end{equation}
Assume $h^0(\widetilde{\mathscr{C}}_\eta, \mathscr{L}^i_\eta)=r+1-i$ for each $i$, and select sections 
\begin{align}
\label{eq:condsigmai}
& \sigma_i\in \widetilde{\pi}_*\, \mathscr{L}^i_\eta \setminus \widetilde{\pi}_*\, \mathscr{L}^{i+1}_\eta \quad\mbox{ for } i=0,\dots, r-1, 
& \sigma_r \in \widetilde{\pi}_*\, \mathscr{L}^{r}_\eta.
\end{align}
In particular, $\{\sigma_i\}_i$ is a basis of $\widetilde{\pi}_*\, \mathscr{L}^0_\eta$.
For each $i$, 
consider the composition of the inclusions
\begin{equation}
\label{eq:iotaLietainMieta}
\iota^* \, \widetilde{\pi}_*\, \mathscr{L}^i_\eta 
\hookrightarrow \iota^* \, \widetilde{\pi}_*\, \mathscr{L}_\eta \xrightarrow{\cong} \widetilde{\pi}_*\, \mathscr{M}_\eta
\hookrightarrow \widetilde{\pi}_*\, \mathscr{M}^i_\eta
\end{equation}
as in \eqref{eq:iotaLiinMi}.
The pointed Prym-Petri map for $\mathscr{L}_\eta$ is defined as in \S\ref{sec:pPPmap}:
\begin{equation}
\label{eq:mubareta}
\overline{\mu}_\eta \colon \bigoplus_{i=0}^r  \,\langle \sigma_i \rangle  \otimes \widetilde{\pi}_*\, \mathscr{M}^i_\eta \Big/ \iota^* \, \widetilde{\pi}_*\, \mathscr{L}^i_\eta \longrightarrow \widetilde{\pi}_*\left( \omega_{\mathscr{C}_\eta} \otimes \epsilon_\eta \right).
\end{equation}

\subsection{Extending $\overline{\mu}_\eta$ over $T$}
First, we extend the map $\overline{\mu}_\eta$ to a map over $T$.
The source of the map $\overline{\mu}_\eta$  is a vector subspace of 
\[
\bigoplus_{i=0}^r  \,\langle \sigma_i \rangle  \otimes \widetilde{\pi}_*\, \mathscr{M}^i_\eta \subseteq \bigoplus_{i=0}^r  \,  \widetilde{\pi}_*\, \mathscr{L}^i_\eta \otimes \widetilde{\pi}_*\, \mathscr{M}^i_\eta.
\] 
Thus we proceed by extending the line bundles $\mathscr{L}_\eta$, $\mathscr{L}^i_\eta$, and $\mathscr{M}^i_\eta$ over $\widetilde{\mathscr{C}}$.

After base extension and minimally resolving the singularities, the line bundle $\mathscr{L}_\eta$ on $\widetilde{\mathscr{C}}_\eta$ extends to a line bundle $\mathscr{L}$ on $\widetilde{\mathscr{C}}$. 
Since $\mathrm{Nm}(\mathscr{L}_\eta)\cong \omega_{\mathscr{C}_\eta}$,
one has  $\mathrm{Nm}(\mathscr{L})\cong \omega_{\mathscr{C}/T} \otimes \mathscr{F}$, where $\mathscr{F}$ is a line bundle assigned to a linear combination of the components of $\mathscr{C}_0$. Since $T$ has trivial Picard group,  $\mathscr{O}_{\mathscr{C}}(\mathscr{C}_0)\cong\mathscr{O}_{\mathscr{C}}$, hence we can assume that $E_g$ does not appear in the linear combination defining $\mathscr{F}$. Moreover, since $\mathrm{Nm}(\mathscr{O}_{\widetilde{\mathscr{C}}}\,(E'_i))\cong\mathrm{Nm}(\mathscr{O}_{\widetilde{\mathscr{C}}}\,(E''_i))\cong\mathscr{O}_{{\mathscr{C}}}\,(E_i)$ for $i=1,\dots,g-1$, and similarly for all rational components, we can assume $\mathrm{Nm}(\mathscr{L})\cong \omega_{\mathscr{C}/T}$ after an appropriate twist.

By the theory of limit linear series \cite{MR723217}, for every component $Y$ of $\widetilde{\mathscr{C}}_0$, there exists a line bundle $\mathscr{L}_Y$ extending $\mathscr{L}_\eta$ over $\widetilde{\mathscr{C}}$  which has degree~$0$ on all components of $\widetilde{\mathscr{C}}_0$ except $Y$. The line bundle $\mathscr{L}_Y$ is obtained from $\mathscr{L}$ by twisting with a linear combination of the components of $\widetilde{\mathscr{C}}_0$. Similarly, one has  extensions over ${\mathscr{C}}$ of line bundles on ${\mathscr{C}}_\eta$.
As noted in \cite[pg.~677]{welters1985theorem}, one has $\iota^*(\mathscr{L}_Y)\cong (\iota^*\mathscr{L})_{\iota(Y)}$ and $\mathrm{Nm}(\mathscr{L}_Y)\cong \mathrm{Nm}(\mathscr{L})_{\varphi(Y)}$.

For each $i$, consider the extension $\mathscr{L}^i_{\widetilde{E}_g}$ of $\mathscr{L}^i_\eta$ over $\widetilde{\mathscr{C}}$ which has degree~$0$ on all components of $\widetilde{\mathscr{C}}_0$ except $\widetilde{E}_g$, and similarly: the extension $\mathscr{M}^i_{\widetilde{E}_g}$ of~$\mathscr{M}^i_\eta$;
the extension $\mathscr{L}_{\widetilde{E}_g}$ of $\mathscr{L}_\eta$; and the extension $\mathscr{M}_{\widetilde{E}_g}$ of $\mathscr{M}_\eta$.
We identify $\widetilde{\pi}_*\, \mathscr{L}^i_{\widetilde{E}_g}$ with the $\mathscr{O}_T$-submodule of $\widetilde{\pi}_*\, \mathscr{L}^i_\eta$ which spans this space, and similarly for $\widetilde{\pi}_*\, \mathscr{M}^i_{\widetilde{E}_g}$. Thus one has inclusions of $\mathscr{O}_T$-submodules 
\[
\widetilde{\pi}_*\, \mathscr{L}^i_{\widetilde{E}_g} \hookrightarrow \widetilde{\pi}_*\, \mathscr{L}^i_\eta \qquad\mbox{and}\qquad 
\widetilde{\pi}_*\, \mathscr{M}^i_{\widetilde{E}_g} \hookrightarrow \widetilde{\pi}_*\, \mathscr{M}^i_\eta.
\] 
After possibly replacing the basis $\{\sigma_i\}_i$ of $\widetilde{\pi}_* \, \mathscr{L}^0_\eta$, we can assume in addition to \eqref{eq:condsigmai} that there exist integers $\alpha_i$ such that 
\begin{align*}
& t^{\alpha_i}{\sigma}_i \in \widetilde{\pi}_*\, \mathscr{L}^i_{\widetilde{E}_g} \setminus \widetilde{\pi}_*\, \mathscr{L}^{i+1}_{\widetilde{E}_g} \quad\mbox{ for } i=0,\dots, r-1, 
& t^{\alpha_r}{\sigma}_r \in \widetilde{\pi}_*\, \mathscr{L}^{r}_{\widetilde{E}_g}
\end{align*}
as in \cite[1.2]{MR723217}.
In particular, $\{t^{\alpha_i}\sigma_i\}_i$ is a basis of $\widetilde{\pi}_*\, \mathscr{L}^0_{\widetilde{E}_g}$. To simplify the notation, let 
\[
\widehat{\sigma}_i:= t^{\alpha_i}\sigma_i \qquad\mbox{ for } i=0,\dots, r.
\]

The map
\begin{equation*}
\overline{\mu}\colon \bigoplus_{i=0}^r  \,\langle \widehat{\sigma}_i \rangle  \otimes \widetilde{\pi}_*\, \mathscr{M}^i_{\widetilde{E}_g} \Big/ \iota^* \, \widetilde{\pi}_*\, \mathscr{L}^i_{\widetilde{E}_g} \longrightarrow \widetilde{\pi}_*\left( \omega_{\mathscr{C}} \otimes \epsilon \right)
\end{equation*}
induced by product of sections gives \eqref{eq:mubareta} over $\eta$. Here, the inclusion of $\mathscr{O}_T$-submodules 
\begin{equation}
\label{eq:iotaLiEginMiEg}
\iota^* \, \widetilde{\pi}_*\, \mathscr{L}^i_{\widetilde{E}_g}
\hookrightarrow \widetilde{\pi}_*\, \mathscr{M}^i_{\widetilde{E}_g}
\end{equation}
is induced from \eqref{eq:iotaLietainMieta}.

\subsection{On the kernel of $\overline{\mu}_\eta$}
Let 
$\rho \in \mathrm{Ker}\left( \overline{\mu}_\eta\right)$. 
We will show  $\rho = 0$.
Assuming $\rho\neq 0$, there exists a unique integer $\gamma$ such that 
\[
t^\gamma\rho \in 
\bigoplus_{i=0}^r  \left( \langle \widehat{\sigma}_i\rangle \otimes \widetilde{\pi}_*\, \mathscr{M}^i_{\widetilde{E}_g} 
\setminus t \left( \langle \widehat{\sigma}_i\rangle \otimes \widetilde{\pi}_*\, \mathscr{M}^i_{\widetilde{E}_g} \right) \right).
\]
Since $\rho \in \mathrm{Ker}\left( \overline{\mu}_\eta\right)$,
one has
$t^\gamma\rho \in \mathrm{Ker}\left( \overline{\mu}\right).$
Next, for each $i$, we restrict to 
\begin{align}
\label{eq:LiMi}
\begin{split}
L^i &:= \mathrm{Im}\left( \widetilde{\pi}_*\, \mathscr{L}^i_{\widetilde{E}_g}  \rightarrow H^0 \left( \mathscr{L}^i_{\widetilde{E}_g} \otimes \mathscr{O}_{\widetilde{E}_g} \right) \right), \\
M^i  &:= \mathrm{Im}\left( \widetilde{\pi}_*\, \mathscr{M}^i_{\widetilde{E}_g}  \rightarrow H^0 \left( \mathscr{M}^i_{\widetilde{E}_g} \otimes \mathscr{O}_{\widetilde{E}_g} \right) \right).
\end{split}
\end{align}
Let $\overline{\sigma}_i\in L^i$ be the image of $\widehat{\sigma}_i$ via the restriction map 
\mbox{$
\widetilde{\pi}_*\, \mathscr{L}^i_{\widetilde{E}_g} \rightarrow L^i.
$}
One has $L^i=\langle \overline{\sigma}_i, \dots, \overline{\sigma}_r\rangle$.
Similarly, let $\overline{\rho}$ be the image of $t^\gamma\rho$ via the map 
\[
\bigoplus_{i=0}^r  \langle \widehat{\sigma}_i\rangle \otimes \widetilde{\pi}_*\, \mathscr{M}^i_{\widetilde{E}_g} \rightarrow 
\bigoplus_{i=0}^r  \langle \overline{\sigma}_i\rangle \otimes M^i.
\]
By assumption, we have  
\begin{equation}
\label{eq:nonzeroassumption}
\overline{\rho} \neq 0 \quad\mbox{ in }\quad \bigoplus_{i=0}^r  \,\langle \overline{\sigma}_i \rangle  \otimes 
M^i \big/ \iota^* L^i.
\end{equation}
Here, the inclusion $\iota^* L^i \hookrightarrow M^i$ is induced from \eqref{eq:iotaLiEginMiEg}.

Let $P'_g$ be the point where $\widetilde{E}_g$ meets the connected components in the rest of $\widetilde{\mathscr{C}}_0$
which contains the point $P_0$, and let $P''_g:=\iota(P'_g)$ (as in Fig.~\ref{fig:}).

\begin{lemma}
\label{lem:ordPP}
The assumption 
$\rho \in \mathrm{Ker}\left( \overline{\mu}_\eta\right)$
implies  
 \begin{equation}
 \label{eq:ordPP}
 \mathrm{ord}_{P'_g}\left(\overline{\rho}\right) \geq 2g-2\qquad\mbox{and}\qquad
  \mathrm{ord}_{P''_g}\left(\overline{\rho} \right) \geq 2g-2.
 \end{equation}
 \end{lemma}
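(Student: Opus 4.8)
\textbf{Proof plan for Lemma \ref{lem:ordPP}.}

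The plan is to mimic the ``vanishing at the node'' argument of Welters \cite[\S 3]{welters1985theorem} and its pointed refinement in Eisenbud--Harris limit linear series style \cite{MR723217, ciliberto1992endomorphisms}, transported to the component $\widetilde E_g$ and its two marked points $P'_g, P''_g$. The key observation is that $\overline\rho$ is an element of $\bigoplus_i \langle\overline\sigma_i\rangle\otimes M^i$ whose image under the multiplication map into $H^0(\widetilde E_g, (\omega_{\mathscr C}\otimes\epsilon)|_{\widetilde E_g})$ must, by the hypothesis $\rho\in\mathrm{Ker}(\overline\mu_\eta)$ combined with the minimality of $\gamma$ (which forces $\overline\rho$ itself to lie in the kernel of the restricted map $\overline\mu\otimes\mathscr O_{\widetilde E_g}$, not merely a $t$-multiple of it), be forced to vanish. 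The point is that the ``anti-invariant differential'' $p\circ\mu$ applied to $\overline\rho$ lands in a space of sections on $E_g$ that — because of how the chain $\widetilde{\mathscr C}_0$ is attached and where $P_0$ sits — can only be nonzero if it is supported away from $\widetilde E_g$, and matching the vanishing order across the two nodes $P'_g, P''_g$ gives precisely the numerical bound $2g-2$.

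The steps, in order, are as follows. First I would make explicit the degrees and the spaces involved: since $\mathscr L_{\widetilde E_g}$ has degree $0$ on every component of $\widetilde{\mathscr C}_0$ except $\widetilde E_g$, and $\mathrm{Nm}(\mathscr L)\cong\omega_{\mathscr C/T}$, one computes $\deg(\mathscr L_{\widetilde E_g}|_{\widetilde E_g})$ and likewise for the $\mathscr L^i_{\widetilde E_g}$ and $\mathscr M^i_{\widetilde E_g}$; the relevant total degree on $\widetilde E_g$ should come out to $2g-2$ (the arithmetic genus contribution of the genus-$(2g-1)$ connected component on the $P_0$ side), which is the source of the number in the statement. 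Second, I would identify $L^i$, $M^i$ and $\iota^*L^i$ concretely as subspaces of sections on $\widetilde E_g$, noting that $\iota$ swaps $P'_g\leftrightarrow P''_g$ and that the images of the $\sigma_i$ under restriction carry prescribed vanishing orders at $P'_g$ dictated by the $\alpha_i$ and the $a_i$. Third — the crux — I would run the ``base-point'' / compatibility argument: an element $\overline\rho$ in the kernel of the multiplication-then-project map, when read on $\widetilde E_g$, gives a section of $(\omega_{\mathscr C}\otimes\epsilon)|_{\widetilde E_g}$ (a $2$-torsion twist of a degree-$(2g-2)$ line bundle) that is simultaneously expressible as $\overline\sigma_i$ times something for each $i$ appearing in $\overline\rho$; chasing vanishing orders at $P'_g$ through the flag $\mathscr L^r\subset\cdots\subset\mathscr L^0$ and using that $P_i - Q_i$ is non-torsion on each $E_i$ (so that nonzero sections on adjacent elliptic components cannot be glued freely), one forces all the contributions to concentrate as vanishing at $P'_g$ of order at least $2g-2$; applying $\iota^*$ and using $\iota(P'_g)=P''_g$ gives the companion inequality.

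The main obstacle I expect is the third step — precisely controlling how sections on $\widetilde E_g$ propagate through the rest of the chain. One must argue that any section of the relevant limit line bundle that is non-vanishing on $\widetilde E_g$ is obstructed from extending across the two attaching nodes unless it vanishes to high order there, and this is exactly where the hypotheses on the special fiber (elliptic components $E_i$ with $P_i-Q_i$ non-torsion, the rational component carrying $P_0$ splitting off a genus-$(2g-1)$ piece) must be used in full force; the bookkeeping of vanishing orders across the chain, compatible with the $\alpha_i$ and the definition of $\gamma$, is delicate. Once that is in place, the inequality $\mathrm{ord}_{P'_g}(\overline\rho)\geq 2g-2$ (and its $\iota$-conjugate) follows by a degree count on $\widetilde E_g$, and the lemma is proved.
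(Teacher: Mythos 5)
Your overall strategy is the right one and is essentially the paper's: specialize along $\widetilde{\mathscr{C}}_0$, use that $\rho\in\mathrm{Ker}(\overline{\mu}_\eta)$ forces $t^{\gamma}\rho\in\mathrm{Ker}(\overline{\mu})$, and exploit the non-torsion hypothesis on the elliptic components to force vanishing at the two nodes of $\widetilde{E}_g$. But there is a gap exactly at what you yourself call the crux, and your accounting of where the number $2g-2$ comes from is off. The bound in \eqref{eq:ordPP} is \emph{not} produced by ``a degree count on $\widetilde{E}_g$'': the fact that $\mathscr{L}_{\widetilde{E}_g}\otimes\mathscr{O}_{\widetilde{E}_g}$ has degree $2g-2$ is what allows the \emph{next} step (Lemma \ref{lem:zeroconclusion}) to conclude $\overline{\rho}=0$ from \eqref{eq:ordPP}; it plays no role in proving \eqref{eq:ordPP} itself. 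The bound arises as $2(g-1)$: one normalizes $\rho$ separately on each component $E'_k$ (via integers $\gamma_k$ and adapted bases $t^{\alpha_{i,k}}\sigma_i$) and proves the step inequality $\mathrm{ord}_{P'_{k+1}}\left(t^{\gamma_{k+1}}\rho|_{E'_{k+1}}\right)\geq \mathrm{ord}_{P'_{k}}\left(t^{\gamma_{k}}\rho|_{E'_{k}}\right)+2$ for $k=1,\dots,g-1$, which telescopes over the $g-1$ elliptic components $E'_1,\dots,E'_{g-1}$ sitting between $P_0$ and $\widetilde{E}_g$. That step inequality --- proved using that the restricted multiplication map kills $t^{\gamma_k}\rho|_{E'_k}$ together with the hypothesis that the difference of the two nodes of $E'_k$ is non-torsion in $\mathrm{Jac}(E'_k)$ --- is the entire content of the lemma, and your plan does not supply it; the paper imports it from \cite[Proof of 3.2]{ciliberto1992endomorphisms} and \cite[pp.~278--280]{MR723217}, together with Welters's observation \cite{welters1985theorem} that the argument survives the passage to special fibers that are chains of elliptic (rather than rational) curves and to characteristic $\neq 2$.

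Two smaller points. Since $\overline{\rho}$ lies in the kernel of the restricted multiplication map, it does not ``give a section of $(\omega_{\mathscr{C}}\otimes\epsilon)|_{\widetilde{E}_g}$'' in any useful sense: the assertion \eqref{eq:ordPP} concerns the orders of vanishing of the tensor factors $\overline{\sigma}_i\otimes\overline{\tau}_j$ of $\overline{\rho}$ itself, not of its (zero) image. And the inequality at $P''_g$ is not obtained by ``applying $\iota^*$'' to the one at $P'_g$: the vanishing orders of the $\overline{\sigma}_i$ and $\overline{\tau}_j$ at $P''_g$ are a priori unrelated to those at $P'_g$, so one must rerun the telescoping argument along the conjugate chain $E''_1,\dots,E''_{g-1}$ approaching $\widetilde{E}_g$ from the other side. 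Indeed, precisely because $P'_g-P''_g$ \emph{is} $2$-torsion on $\widetilde{E}_g$, the chain argument stops there and yields nothing beyond \eqref{eq:ordPP}, which is why the separate Lemma \ref{lem:zeroconclusion} is needed to finish.
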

 This means that $\overline{\rho}$ is a linear combination of elements $\overline{\sigma}_i\otimes \overline{\tau}_j$ with $\overline{\tau}_j\in M^i$ such that 
 $\mathrm{ord}_{P'_g}(\overline{\sigma}_i)+\mathrm{ord}_{P'_g}(\overline{\tau}_j)\geq 2g-2$ for all $i,j$, and similarly at $P''_g$.

In the proof of the lemma, we combine  results from \cite[pp.~278-280]{MR723217}, \cite[Proof of 3.2]{ciliberto1992endomorphisms}, and \cite[p.~679]{welters1985theorem}.

\begin{proof}
First we set the notation.
Let $k\in \{1, \dots, g-1\}$.
We can assume that there exist integers $\alpha_{i, k}$ such that 
\begin{align*}
& t^{\alpha_{i,k}}{\sigma}_i \in \widetilde{\pi}_*\, \mathscr{L}^i_{E'_k} \setminus \widetilde{\pi}_*\, \mathscr{L}^{i+1}_{E'_k} \quad\mbox{ for } i=0,\dots, r-1, 
& t^{\alpha_{r, k}}{\sigma}_r \in \widetilde{\pi}_*\, \mathscr{L}^{r}_{E'_k}
\end{align*}
as in \cite[1.2]{MR723217}.
Let $\widehat{\sigma}_{i, k}:= t^{\alpha_{i, k}}\sigma_i$ for  $i=0,\dots, r$.
Then there exists a unique integer $\gamma_k$ such that
\[
t^{\gamma_k}\rho \in 
\bigoplus_{i=0}^r  \left( \langle \widehat{\sigma}_{i, k}\rangle \otimes \widetilde{\pi}_*\, \mathscr{M}^i_{E'_k} 
\setminus t \left( \langle \widehat{\sigma}_{i, k}\rangle \otimes \widetilde{\pi}_*\, \mathscr{M}^i_{E'_k} \right) \right).
\]
When $k=g$, we set $E'_{g}:= \widetilde{E}_g$, $\widehat{\sigma}_{i, g}:=\widehat{\sigma}_i$, and $\gamma_g :=\gamma$. 

For $k=1,\dots,g-1$, let  $P'_k$ be the point where $E'_{k}$ meets the connected components in the rest of $\widetilde{\mathscr{C}}_0$ which contains the point $P_0$ (as in Fig.~\ref{fig:}).
From  \cite[Proof of 3.2]{ciliberto1992endomorphisms}, which adapts to the pointed case  the argument first treated in the unpointed case in \cite[pp.~278-280]{MR723217}, 
the assumption $\rho \in \mathrm{Ker}\left( \overline{\mu}_\eta\right)$ 
implies   
\begin{equation}
\label{eq:EHineq}
 \mathrm{ord}_{P'_{k+1}}\left(t^{\gamma_{k+1}}\rho |_{E'_{k+1}}\right) \geq 
 \mathrm{ord}_{P'_{k}}\left(t^{\gamma_{k}}\rho |_{E'_{k}}\right) + 2
\quad \mbox{for $k=1,\dots,g-1$.}
\end{equation}
Here one uses that the difference of the two nodal points in $E'_k$ is not a torsion point in $\mathrm{Jac}(E'_k)$ for $k=1,\dots,g-1$.
Actually, the proof in \cite{MR723217, ciliberto1992endomorphisms}  uses families of curves with special fiber 
obtained from $\widetilde{\mathscr{C}}_0$ by replacing each elliptic component with a rational component meeting the rest of the curve in the same two points and meeting in a third point a new chain of rational curves ending with an elliptic component.
However, as it was first shown in \cite{welters1985theorem}, the argument extends to the case of families of curves with special fiber given by a chain of rational and elliptic curves, and gives \eqref{eq:EHineq} for a given $k$, as long as the difference of the two nodal points in $E'_k$ is not a torsion point in $\mathrm{Jac}(E'_k)$. 
Moreover, such families  allow one to work more generally over an arbitrary field of characteristic different from $2$ \cite{welters1985theorem}.

The inequality \eqref{eq:EHineq} for $k=1,\dots,g-1$ implies $\mathrm{ord}_{P'_g}\left(\overline{\rho}\right) \geq 2g-2$.
A similar argument holds after replacing $P'_a$ with $P''_a:=\iota(P'_a)$, and $E'_a$ with $E''_a:=\iota(E'_a)$, for each $a=1,\dots, g$, hence the statement.
\end{proof}

We emphasize that the difference $P'_g - P''_g$  is a \mbox{$2$-torsion} point in $\mathrm{Jac}(\widetilde{E}_g)$, hence  the above application of the argument from \cite{MR723217, ciliberto1992endomorphisms} does not yield anything more than \eqref{eq:ordPP}.

While the conclusion of Lemma \ref{lem:ordPP} is similar to \cite[(2.20)]{welters1985theorem}, in the next step  we need to account for the additional complexity  
given by the more complicated source of $\overline{\mu}$.

\subsection{A vanishing statement}
For $i=0,\dots,r$, 
one has from \eqref{eq:LietaMieta}:
\begin{align}
\label{eq:LiEg}
\begin{split}
\mathscr{L}^i_{\widetilde{E}_g} \otimes \mathscr{O}_{\widetilde{E}_g} 
&\cong \mathscr{L}_{\widetilde{E}_g} \otimes \mathscr{O}_{\widetilde{E}_g}\left(-a_i \, P'_g\right),\\
\mathscr{M}^i_{\widetilde{E}_g} \otimes \mathscr{O}_{\widetilde{E}_g}
& \cong \mathscr{M}_{\widetilde{E}_g} \otimes \mathscr{O}_{\widetilde{E}_g}\left(a_i \, P'_g\right).
\end{split}
\end{align}

From \cite[(2.21)]{welters1985theorem}, the line bundles $\mathscr{L}_{\widetilde{E}_g} \otimes \mathscr{O}_{\widetilde{E}_g}$ and $\mathscr{M}_{\widetilde{E}_g} \otimes \mathscr{O}_{\widetilde{E}_g}$ are both isomorphic to 
\begin{equation}
\label{eq:LEgOEg}
\mbox{either }\quad \mathscr{O}_{\widetilde{E}_g}\left((2g-2)P'_g\right) \quad\mbox{ or }\quad \mathscr{O}_{\widetilde{E}_g}\left((2g-3)P'_g + P''_g\right).
\end{equation}
This follows from the isomorphisms
\begin{align*}
\mathrm{Nm}\left( \mathscr{L}_{\widetilde{E}_g} \otimes \mathscr{O}_{\widetilde{E}_g} \right) &\cong \mathscr{O}_{E_g}\left( (2g-2)P_g\right), \\
\mathrm{Nm}\left( \mathscr{M}_{\widetilde{E}_g} \otimes \mathscr{O}_{\widetilde{E}_g} \right) &\cong \mathscr{O}_{E_g}\left( (2g-2)P_g\right),
\end{align*}
and the fact that the line bundles in \eqref{eq:LEgOEg} are the  two distinct line bundles in the inverse image of $\mathscr{O}_{E_g}\left( (2g-2)P_g\right)$ via the $(2:1)$ norm map 
\[
\mathrm{Nm} \colon \mathrm{Pic}^{2g-2}(\widetilde{E}_g)\rightarrow \mathrm{Pic}^{2g-2}({E}_g)
\]
and are both invariant by $\iota$.
For this, one uses that $P'_g - P''_g$ is a $2$-torsion point in $\mathrm{Jac}( \widetilde{E}_g)$.
Combining  with \eqref{eq:LiEg}, we deduce the following:

\begin{lemma}
\label{lem:zeroconclusion}
Let $\mathscr{L}_{\widetilde{E}_g} \otimes \mathscr{O}_{\widetilde{E}_g}$ and $\mathscr{M}_{\widetilde{E}_g} \otimes \mathscr{O}_{\widetilde{E}_g}$ be both isomorphic to 
either one of the two line bundles in \eqref{eq:LEgOEg} and assume
\[
\overline{\rho} \in \bigoplus_{i=0}^r  \,\langle \overline{\sigma}_i \rangle  \otimes M^i \big/ \iota^* L^i
\]
satisfies \eqref{eq:ordPP}.
Then 
$\overline{\rho} = 0.$
\end{lemma}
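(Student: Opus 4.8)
The plan is to analyze the structure of the source $\bigoplus_{i=0}^r \langle \overline{\sigma}_i\rangle \otimes M^i/\iota^* L^i$ on the single elliptic curve $\widetilde{E}_g$ and show that the vanishing conditions \eqref{eq:ordPP} at the two points $P'_g$ and $P''_g$ are incompatible with $\overline{\rho}\neq 0$. First I would make the degrees explicit: by \eqref{eq:LEgOEg} together with \eqref{eq:LiEg}, the bundle $\mathscr{L}^i_{\widetilde{E}_g}\otimes\mathscr{O}_{\widetilde{E}_g}$ has degree $2g-2-a_i$ and $\mathscr{M}^i_{\widetilde{E}_g}\otimes\mathscr{O}_{\widetilde{E}_g}$ has degree $2g-2+a_i$, so that $L^i$ is (generically) of dimension $2g-2-a_i$ and $M^i$ of dimension $2g-2+a_i$, while $\iota^* L^i\hookrightarrow M^i$ fills a subspace of dimension $2g-2-a_i$. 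Thus $M^i/\iota^* L^i$ has dimension $2a_i$, matching the count $|\bm a|$ of the source from \S\ref{sec:pPPmap}. The point is to write each contribution $\overline{\sigma}_i\otimes \overline{\tau}$ with $\overline{\tau}\in M^i$ in terms of the vanishing order at $P'_g$: since $\mathrm{ord}_{P'_g}(\overline{\sigma}_i)$ is controlled by the filtration $L^r\subset\cdots\subset L^0$ and the choice of $\overline{\sigma}_i$, and since by Lemma~\ref{lem:ordPP} the total order $\mathrm{ord}_{P'_g}(\overline{\sigma}_i)+\mathrm{ord}_{P'_g}(\overline{\tau})$ must be at least $2g-2$ for every term appearing in $\overline{\rho}$, one forces $\overline{\tau}$ to vanish to high order at $P'_g$; the same reasoning at $P''_g$ forces high vanishing there.

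The next step is to translate "$\overline{\tau}\in M^i$ vanishing to order $\geq 2g-2-\mathrm{ord}_{P'_g}(\overline{\sigma}_i)$ at $P'_g$ and to order $\geq 2g-2-\mathrm{ord}_{P''_g}(\overline{\sigma}_i)$ at $P''_g$" into a statement that $\overline{\tau}$ lies in $\iota^* L^i$, so that the class in the quotient $M^i/\iota^* L^i$ vanishes. Here is where the explicit form \eqref{eq:LEgOEg} is essential: when $\mathscr{M}_{\widetilde{E}_g}\otimes\mathscr{O}_{\widetilde{E}_g}\cong\mathscr{O}_{\widetilde{E}_g}((2g-2)P'_g)$ (respectively $\cong\mathscr{O}_{\widetilde{E}_g}((2g-3)P'_g+P''_g)$), a section of $\mathscr{M}^i_{\widetilde{E}_g}\otimes\mathscr{O}_{\widetilde{E}_g}\cong\mathscr{O}_{\widetilde{E}_g}((2g-2+a_i)P'_g)$ that vanishes to the prescribed order is, up to the identification $\iota^*$, a section of $\mathscr{L}^i_{\widetilde{E}_g}\otimes\mathscr{O}_{\widetilde{E}_g}$ — this is exactly the point of recording that both line bundles are $\iota$-invariant and are the two square roots of $\mathscr{O}_{E_g}((2g-2)P_g)$. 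I would run this argument term-by-term in a chosen basis of $\bigoplus_i\langle\overline{\sigma}_i\rangle\otimes M^i$ adapted to the vanishing filtrations at $P'_g$ (and check compatibility with the filtration at $P''_g$, using that $\iota$ swaps $P'_g$ and $P''_g$), concluding that every basis vector that could appear in $\overline{\rho}$ already maps to zero in the quotient, so $\overline{\rho}=0$.

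The main obstacle I anticipate is the bookkeeping of vanishing orders at the \emph{two} points simultaneously on the elliptic curve $\widetilde{E}_g$, precisely because — as the paragraph after Lemma~\ref{lem:ordPP} warns — $P'_g-P''_g$ is $2$-torsion rather than non-torsion, so the Eisenbud--Harris/Welters "adding $2$ per elliptic tail" trick gives no extra room here and one cannot simply push the vanishing onto a single point. Concretely, I will need to verify that the pair of inequalities $\mathrm{ord}_{P'_g}(\overline{\tau})\geq 2g-2-\mathrm{ord}_{P'_g}(\overline{\sigma}_i)$ and $\mathrm{ord}_{P''_g}(\overline{\tau})\geq 2g-2-\mathrm{ord}_{P''_g}(\overline{\sigma}_i)$ together with the degree $2g-2+a_i$ of the ambient line bundle leave only a $(2g-2-a_i)$-dimensional space of possibilities, which then must coincide with $\iota^* L^i$ by the $\iota$-invariance of \eqref{eq:LEgOEg}. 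I expect the argument to split into the two cases of \eqref{eq:LEgOEg}, and in each case the identification of the space of highly-vanishing sections with $\iota^* L^i$ is the crux; once that is in hand, $\overline{\rho}=0$ follows immediately from \eqref{eq:nonzeroassumption}-versus-the-conclusion, giving a contradiction with $\rho\neq 0$ back in the previous subsection.
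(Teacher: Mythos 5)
Your overall strategy coincides with the paper's: translate the two order conditions of \eqref{eq:ordPP} into the statement that the relevant part of $\overline{\rho}$ lies in $\langle\overline{\sigma}_i\rangle\otimes\mathrm{Im}\left(\iota^*L^i\hookrightarrow M^i\right)$, hence vanishes in the quotient. But the execution has two concrete problems. First, you misidentify the spaces $L^i$ and $M^i$: by \eqref{eq:LiMi} they are the images of the rank-$(r+1-i)$ and rank-$(r+1-i+a_i)$ modules $\widetilde{\pi}_*\mathscr{L}^i_{\widetilde{E}_g}$ and $\widetilde{\pi}_*\mathscr{M}^i_{\widetilde{E}_g}$ inside the full spaces of sections on $\widetilde{E}_g$, not the full spaces $H^0\left(\mathscr{L}^i_{\widetilde{E}_g}\otimes\mathscr{O}_{\widetilde{E}_g}\right)$ and $H^0\left(\mathscr{M}^i_{\widetilde{E}_g}\otimes\mathscr{O}_{\widetilde{E}_g}\right)$ of dimensions $2g-2-a_i$ and $2g-2+a_i$. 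In particular $M^i/\iota^*L^i$ has dimension $a_i$, not $2a_i$ (note that $\sum_i 2a_i=2|\bm{a}|$, so your own consistency check against the dimension $|\bm{a}|$ of the source of $\overline{\mu}$ fails). Second, and more seriously, the proposed endgame --- that the double vanishing condition cuts out a $(2g-2-a_i)$-dimensional space ``which must coincide with $\iota^*L^i$'' --- is dimensionally impossible. Since $\mathrm{ord}_{P'_g}(\overline{\sigma}_i)+\mathrm{ord}_{P''_g}(\overline{\sigma}_i)\leq 2g-2-a_i$, the two lower bounds you impose on $\overline{\tau}$ sum to at least the degree $2g-2+a_i$ of the ambient line bundle, so the space of sections satisfying both is at most \emph{one}-dimensional; the correct claim (and the one the paper proves) is that this tiny space is \emph{contained in} the image of $\iota^*L^i$ --- when nonzero it is spanned by the image of $\iota^*\overline{\sigma}_i$, whose divisor one computes explicitly using $2P'_g\equiv 2P''_g$.

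The missing technical device is the one the paper spends most of its proof on: a basis $\{e_n\}$ of the ambient space $V$ whose elements have pairwise distinct vanishing orders at $P'_g$ \emph{and} pairwise distinct vanishing orders at $P''_g$ simultaneously, constructed via the explicit divisors \eqref{eq:Divs4es} from $P'_g$, $P''_g$ and auxiliary points $Q,R$ with $P'_g+P''_g\equiv Q+R$. This is what converts the order conditions on the tensor $\overline{\rho}_i$ into conditions on individual coefficients $d_{\ell,m}$, and it produces the exact complementarity relations \eqref{eq:ordP''la}--\eqref{eq:ordP''mu} (with their parity alternative, reflecting the two cases of \eqref{eq:LEgOEg}), which force $c_\ell+b_m=2g-2$ exactly and pin down $\mathrm{div}(\lambda_\ell)$ and $\mathrm{div}(\mu_m)$ completely. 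You flag the need to ``check compatibility with the filtration at $P''_g$'' but offer no mechanism for doing so; without the simultaneous basis and the resulting order arithmetic, the argument does not close. So: right high-level plan, but the quantitative heart of the proof is absent and the dimension counts that are supposed to replace it are incorrect.
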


\begin{proof}
From \eqref{eq:LiMi}, \eqref{eq:LiEg}, and the assumption $\mathscr{L}_{\widetilde{E}_g} \otimes \mathscr{O}_{\widetilde{E}_g}\cong\mathscr{M}_{\widetilde{E}_g} \otimes \mathscr{O}_{\widetilde{E}_g}$, all spaces $L^0, \dots, L^r$ and $M^0,\dots, M^r$ inject in $V:= H^0( \mathscr{M}_{\widetilde{E}_g} \otimes \mathscr{O}_{\widetilde{E}_g}(a_r \, P'_g) )$.
We select a basis $\{e_n\}_n$ of $V$ such that the vectors $e_n$ have distinct orders of vanishing at $P'_g$ and distinct orders of vanishing at $P''_g$.

For this,  consider first the case $\mathscr{L}_{\widetilde{E}_g} \otimes \mathscr{O}_{\widetilde{E}_g} \cong \mathscr{M}_{\widetilde{E}_g} \otimes \mathscr{O}_{\widetilde{E}_g} \cong \mathscr{O}_{\widetilde{E}_g}\left((2g-2)P'_g\right)$.
Recall that $2P'_g \equiv 2P''_g$ on $\widetilde{E}_g$. 
Let $Q$ and $R$ be points in $\widetilde{E}_g\setminus \{P'_g, P''_g\}$ such that $P'_g + P''_g \equiv Q+R$. 
Define the following divisors on $\widetilde{E}_g$
\begin{align}
\label{eq:Divs4es}
\begin{split}
D_{2k}:= \,\,& (2g-2+a_r -2k)P'_g + 2k P''_g \\ 
&\qquad \mbox{for } k=0,\dots,g-1+\Big\lfloor\frac{a_r}{2}\Big\rfloor,\\
D_{2k+1}:= \,\, & (2g-5+ a_r-2k)P'_g + (2k+1) P''_g + Q+R \\ 
&\qquad \mbox{for } k=0,\dots,g+\Big\lfloor\frac{a_r-5}{2}\Big\rfloor.
\end{split}
\end{align}
For $n\in\{0,\dots,2g-4+a_r, 2g-2+a_r\}$, let $e_n$ be a section in $V$ with divisor $D_n$.
The sequence of orders of vanishing of such  $2g-2+a_r$  sections $e_n$ at  $P'_g$ is $(0,\dots,2g-4+a_r, 2g-2+a_r)$, and similarly at $P''_g$. It follows that $\{e_n\}_n$ is a basis of $V$ with the desired property.

Write $\overline{\rho}=\oplus_{i=0}^r \,\overline{\rho}_i$ with $\overline{\rho}_i\in \langle \overline{\sigma}_i \rangle  \otimes M^i$  for each $i$. 
Select $i$ such that $\overline{\rho}_i\neq 0$.
The assumption \eqref{eq:ordPP} implies that 
\begin{equation}
\label{eq:ordPPi}
 \mathrm{ord}_{P'_g}\left(\overline{\rho}_i\right) \geq 2g-2\qquad\mbox{and}\qquad
  \mathrm{ord}_{P''_g}\left(\overline{\rho}_i \right) \geq 2g-2.
\end{equation}
The basis $\{e_n\}_n$ of $V$ induces 
\begin{align*}
&\mbox{a basis } \{\lambda_\ell\}_\ell \mbox{ of the subspace } H^0\left(\mathscr{L}_{\widetilde{E}_g} \otimes \mathscr{O}_{\widetilde{E}_g}(-a_i \, P'_g)\right), \mbox{ and} \\
&\mbox{a basis } \{\mu_m\}_m \mbox{ of the subspace } H^0\left(\mathscr{M}_{\widetilde{E}_g} \otimes \mathscr{O}_{\widetilde{E}_g}(a_i \, P'_g)\right).
\end{align*}
Let $c_\ell:=\mathrm{ord}_{P'_g}(\lambda_\ell)$. The construction of the basis via the divisors in \eqref{eq:Divs4es} implies
\begin{equation}
\label{eq:ordP''la}
\mathrm{ord}_{P''_g}(\lambda_\ell) = \left\{
\begin{array}{ll}
2g-2-a_i-c_\ell & \quad\mbox{ if}\,\,\, a_i + c_\ell \equiv 0 \,\,\,\mathrm{mod} \,\,2,\\
2g-4-a_i-c_\ell & \quad\mbox{ if}\,\,\, a_i + c_\ell \equiv 1 \,\,\,\mathrm{mod} \,\,2.
\end{array}
\right. 
\end{equation}
Similarly, let $b_m:=\mathrm{ord}_{P'_g}(\mu_m)$. Then 
\begin{equation}
\label{eq:ordP''mu}
\mathrm{ord}_{P''_g}(\mu_m) = \left\{
\begin{array}{ll}
2g-2+a_i-b_m & \quad\mbox{ if}\,\,\, a_i + b_m \equiv 0 \,\,\,\mathrm{mod} \,\,2,\\
2g-4+a_i-b_m & \quad\mbox{ if}\,\,\, a_i + b_m \equiv 1 \,\,\,\mathrm{mod} \,\,2.
\end{array}
\right. 
\end{equation}
Since $\langle \overline{\sigma}_i \rangle \subseteq H^0(\mathscr{L}_{\widetilde{E}_g} \otimes \mathscr{O}_{\widetilde{E}_g}(-a_i \, P'_g))$ and 
$M^i\subseteq H^0(\mathscr{M}_{\widetilde{E}_g} \otimes \mathscr{O}_{\widetilde{E}_g}(a_i \, P'_g))$,
we can then write $\overline{\rho}_i = \sum_{\ell,m} d_{\ell,m}\, \lambda_\ell \otimes \mu_m$, for some coefficients $d_{\ell,m}$.
From \eqref{eq:ordPPi}, and since the basis $\{\lambda_\ell\}_\ell$ (respectively, $\{\mu_m\}_m$) has distinct orders of vanishing at $P'_g$, and similarly at $P''_g$,
one has $d_{\ell,m}=0$ for those $\ell,m$ which fail the conditions
\[
\mathrm{ord}_{P'_g}(\lambda_\ell)+\mathrm{ord}_{P'_g}(\mu_m)\geq 2g-2 \,\,\mbox{ and }\,\,
\mathrm{ord}_{P''_g}(\lambda_\ell)+\mathrm{ord}_{P''_g}(\mu_m)\geq 2g-2.
\]
Furthermore, these conditions 
imply first $c_\ell+b_m\geq 2g-2$, and then
\begin{equation}
\label{eq:conclusion}
c_\ell+b_m= 2g-2 \qquad\mbox{and} \qquad a_i + c_\ell \equiv  a_i + b_m \equiv 0 \,\,\,\mathrm{mod} \,\,2.
\end{equation}
It follows that 
\[
\mathrm{div}\left(\lambda_\ell \right) = c_\ell \,P'_g + (b_m-a_i) P''_g \qquad \mbox{and} \qquad
\mathrm{div}\left(\mu_m \right) = b_m \, P'_g + (a_i+c_\ell) P''_g.
\]
Hence, the image of $\iota^* \lambda_\ell$ via the composition of the inclusions 
\begin{multline*}
\iota^* H^0\left(\mathscr{L}_{\widetilde{E}_g} \otimes \mathscr{O}_{\widetilde{E}_g}(-a_i \, P'_g)\right)
\hookrightarrow \iota^* H^0\left(\mathscr{L}_{\widetilde{E}_g} \otimes \mathscr{O}_{\widetilde{E}_g}\right)
\xrightarrow{\cong} H^0\left(\mathscr{M}_{\widetilde{E}_g} \otimes \mathscr{O}_{\widetilde{E}_g}\right)\\
\hookrightarrow H^0\left(\mathscr{M}_{\widetilde{E}_g} \otimes \mathscr{O}_{\widetilde{E}_g}(a_i \, P'_g)\right)
\end{multline*}
lies in $\langle\mu_m\rangle$. We deduce  $\overline{\rho}_i\in \langle \overline{\sigma}_i \rangle \otimes \mathrm{Im}\left(\iota^* L^i \hookrightarrow M^i\right)$, hence the statement.

\smallskip

Finally, in the case $\mathscr{L}_{\widetilde{E}_g} \otimes \mathscr{O}_{\widetilde{E}_g} \cong \mathscr{M}_{\widetilde{E}_g} \otimes \mathscr{O}_{\widetilde{E}_g} \cong \mathscr{O}_{\widetilde{E}_g}\left((2g-3)P'_g + P''_g\right)$, replace \eqref{eq:Divs4es} with
\begin{align*}
D_{2k+1}:= \,\,& (2g-3+a_r -2k)P'_g + (2k+1) P''_g \\ 
&\qquad \mbox{for } k=0,\dots,g+\Big\lfloor\frac{a_r-3}{2}\Big\rfloor,\\
D_{2k}:= \,\, & (2g-4+ a_r-2k)P'_g + 2k P''_g + Q+R \\ 
&\qquad \mbox{for } k=0,\dots,g-2+\Big\lfloor\frac{a_r}{2}\Big\rfloor,
\end{align*}
replace \eqref{eq:ordP''la} with 
\begin{equation*}
\mathrm{ord}_{P''_g}(\lambda_\ell) = \left\{
\begin{array}{ll}
2g-2-a_i-c_\ell & \quad\mbox{ if}\,\,\, a_i + c_\ell \equiv 1 \,\,\,\mathrm{mod} \,\,2,\\
2g-4-a_i-c_\ell & \quad\mbox{ if}\,\,\, a_i + c_\ell \equiv 0 \,\,\,\mathrm{mod} \,\,2,
\end{array}
\right. 
\end{equation*}
and similarly \eqref{eq:ordP''mu} with
\begin{equation*}
\mathrm{ord}_{P''_g}(\mu_m) = \left\{
\begin{array}{ll}
2g-2+a_i-b_m & \quad\mbox{ if}\,\,\, a_i + b_m \equiv 1 \,\,\,\mathrm{mod} \,\,2,\\
2g-4+a_i-b_m & \quad\mbox{ if}\,\,\, a_i + b_m \equiv 0 \,\,\,\mathrm{mod} \,\,2.
\end{array}
\right. 
\end{equation*}
After making these changes, the conclusion \eqref{eq:conclusion} becomes 
\[
c_\ell+b_m= 2g-2 \qquad\mbox{and} \qquad a_i + c_\ell \equiv  a_i + b_m \equiv 1 \,\,\,\mathrm{mod} \,\,2.
\]
Otherwise the argument proceeds  as in the previous case.
\end{proof}

\subsection{Proof of Theorem \ref{thm:prympetriateta}} 
For $\mathscr{L}_\eta \in \mathrm{V}^{\bm{a}}(\mathscr{C}_\eta, \epsilon_\eta, P_\eta)$ with $h^0(\widetilde{\mathscr{C}}_\eta, \mathscr{L}^i_\eta)=r+1-i$ for each $i$, consider the pointed Prym-Petri map $\overline{\mu}_\eta$ for $\mathscr{L}_\eta$ as in \eqref{eq:mubareta}. 
Let $\rho \in \mathrm{Ker}\left( \overline{\mu}_\eta\right)$. Assuming $\rho\neq 0$, one deduces an element $\overline{\rho}\neq 0$ as in \eqref{eq:nonzeroassumption}.
By Lemmata \ref{lem:ordPP} and \ref{lem:zeroconclusion}, one deduces $\overline{\rho}= 0$, a contradiction. 
\hfill$\square$


\section{Prym-Tyurin varieties for Prym-Brill-Noether curves}
\label{sec:expPT}

Here we prove Theorem \ref{thm:expPT}. The statement is inspired by a result from Ortega \cite{ortega2013brill}, showing that for a general curve $C$ of genus $2a+1$, 
$\mathrm{Jac}(C)$ is isomorphic as a principally polarized abelian variety to a Prym-Tyurin variety for the  Brill-Noether curve consisting of line bundles of degree $a+2$ with at least two independent global sections, of exponent computed  by the Catalan number $\frac{(2a)!}{a!(a+1)!}$.
The proof follows the argument from \cite{ortega2013brill} after appropriately translating the geometric objects from the Brill-Noether  to the Prym-Brill-Noether setting.

\begin{proof}[Proof of Theorem \ref{thm:expPT}]
Select $L_0\in \mathrm{V}^{\bm{a}}(C, \epsilon, P)$, and consider the embedding
\[
\mathrm{V}^{\bm{a}}(C, \epsilon, P) \hookrightarrow \mathrm{Pr} (C,\epsilon), \qquad L\mapsto L\otimes L_0^{-1}.
\]
From \cite[Remark 1.9]{fulton1981connectedness}, this induces a surjective map 
\begin{equation}
\label{eq:surjH1}
H_1(\mathrm{V}^{\bm{a}}(C, \epsilon, P),\mathbb{Z})\twoheadrightarrow H_1(\mathrm{Pr} (C,\epsilon), \mathbb{Z}).
\end{equation}
Since 
$H_1(\mathrm{Pr} (C,\epsilon), \mathbb{Z})\cong H_1(\widetilde{C}, \mathbb{Z})^-$, where $H_1(\widetilde{C}, \mathbb{Z})^-$ is the space of anti-invariant cycles, 
the rational extension of \eqref{eq:surjH1} induces a surjection
\[
\mathrm{Jac}(\mathrm{V}^{\bm{a}}(C, \epsilon, P)) \twoheadrightarrow \mathrm{Pr} (C,\epsilon).
\]
By duality, using the  respective principal polarizations  
\[
\mathrm{Pr} (C,\epsilon) \cong \mathrm{Pr} (C,\epsilon)^\vee \qquad \mbox{and}\qquad \mathrm{Jac}(\mathrm{V}^{\bm{a}}(C, \epsilon, P)) \cong \mathrm{Jac}(\mathrm{V}^{\bm{a}}(C, \epsilon, P))^\vee, 
\]
one has an embedding
\[
\mathrm{Pr} (C,\epsilon) \hookrightarrow \mathrm{Jac}(\mathrm{V}^{\bm{a}}(C, \epsilon, P)).
\]
 Welters's criterion \cite[12.2.2]{zbMATH02120946} yields that $(\mathrm{Pr} (C,\epsilon), \Xi)$ is isomorphic  to a Prym-Tyurin variety for the curve $\mathrm{V}^{\bm{a}}(C, \epsilon, P)$, and its exponent $e$ is determined by
$[\mathrm{V}^{\bm{a}}(C, \epsilon, P)] = \frac{e}{(g-2)!} [\Xi]^{g-2}$.
From Theorem \ref{thm:deglocus}, we deduce
\[
e= (g-2)! \, 2^{|\bm{a}|-\ell(\bm{a})}\, \prod_{i=0}^r \frac{1}{a_i!}\, \prod_{0\leq j<i\leq r}\frac{a_i-a_j}{a_i+a_j} .
\]
The assumption $\beta(g,\bm{a})=1$ implies $g-2=|\bm{a}|$, hence the statement. 
\end{proof}

\subsection*{Acknowledgements}
I would like to thank the referee for the interest in the manuscript and for the suggested edits.

\bibliographystyle{alphanumN}
\bibliography{Biblio}

\end{document}